\documentclass{article}
\usepackage{graphicx} 
\graphicspath{ {./Images/} }
\usepackage{subcaption}
\usepackage{amsmath,amsfonts,amsthm,amssymb}
\usepackage{hyperref}
\usepackage{enumitem}
\usepackage[toc,page]{appendix}
\usepackage[noadjust,compress]{cite}

\usepackage[margin=1in]{geometry}
\usepackage{xcolor}
\usepackage{algorithm}
\usepackage{algpseudocode}
\usepackage{comment}
\usepackage{tabularx}
\usepackage{multirow}

\newtheorem{theorem}{Theorem}

\newtheorem{lemma}{Lemma\textbf{}}

\theoremstyle{remark}
\newtheorem{remark}{Remark}
\newtheorem{example}{Example\textbf{}}

\DeclareMathOperator\erf{erf}

\newtheorem*{manualassumpinner}{Assumption}

\newenvironment{assump}[1]{%
  \begin{manualassumpinner}[{\normalfont#1}]%
}{%
  \end{manualassumpinner}%
}

\title{Discontinuity Analysis and Semi-Analytic Spectral Approximation for the Nonlocal Poisson Equation
  \thanks{This project is based upon work supported by the National Science Foundation under Grant No. 2108588.}
}

\author{%
  Thinh Dang\thanks{tdang@ksu.edu, Department of Mathematics, Kansas State University, Manhattan, KS.} \quad
  Bacim Alali\thanks{bacimalali@math.ksu.edu, Department of Mathematics, Kansas State University, Manhattan, KS.} \quad
  Nathan Albin\thanks{albin@ksu.edu, Department of Mathematics, Kansas State University, Manhattan, KS.}
}

\date{}

\begin{document}

\maketitle

\begin{abstract}
We study a  nonlocal Poisson problem with discontinuous source term and analyze how the regularity of the integral kernel determines the discontinuity structure of the corresponding solution. Under general assumptions on compactly supported integrable kernels, we show that jump discontinuities in the source term are inherited by the solution. We then identify two principal mechanisms governing higher-order regularity: singular behavior of the kernel at the origin and jump discontinuities of the kernel, or of its derivatives, at the horizon endpoints. Singularities at the origin lead to blow-up of certain derivatives of the solution at the source discontinuity, while jumps at the horizon generate cascades of derivative discontinuities at translated locations. These phenomena occur for kernels commonly used in peridynamic-type models. By contrast, compactly supported \(C^\infty\) kernels do not generate derivative blow-up or cascading losses of regularity, and in this case the source term and the solution have equivalent piecewise smooth regularity.

Motivated by this analysis, we develop a semi-analytic spectral method for the accurate numerical treatment of discontinuous nonlocal problems. The method uses successive smoothing transformations and explicitly constructed correction functions to convert the original problem into an auxiliary problem with improved regularity. A spectral solver is then applied to the smoothed problem, and the approximation to the original solution is recovered by adding back the analytic corrections. Numerical experiments show substantial gains in accuracy and convergence, demonstrating that the method effectively mitigates the loss of accuracy caused by discontinuities and Gibbs oscillations while retaining the efficiency of spectral methods.   
\end{abstract}
\section{Introduction}

We study a one-dimensional nonlocal Poisson problem with discontinuous source and investigate both the analytical structure of its solutions and the design of accurate numerical methods for resolving the resulting discontinuities. Specifically, we consider
\begin{align}\label{eq:Poisson}
    \begin{cases}
        Lu=f & \text{in } \Omega,\\
        u=b & \text{in } \Omega_I,
    \end{cases}
\end{align}
where \(b\) is a prescribed constraint on the interaction region, and the source term \(f\) has one or more jump discontinuities in \(\Omega\) while being otherwise differentiable. Here \(\Omega=(a_1,a_2)\) is a bounded open interval and
\[
\Omega_I=(a_1-\delta,a_1]\cup[a_2,a_2+\delta)
\]
denotes the associated \(\delta\)-collar region, where \(\delta>0\) is the horizon, representing the nonlocal interaction length scale. The nonlocal Laplace operator \cite{nonlocal_calc_2013} is defined by
\begin{align}\label{eq:L_operator}
    Lu(x)=\int_{\Omega\cup\Omega_I}(u(y)-u(x))K(x,y)\,dy,
\end{align}
where \(K\) is the interaction kernel.

Nonlocal diffusion and peridynamic models have attracted considerable attention because of their ability to represent long-range interactions and to naturally accommodate defects, interfaces, and fracture. In contrast to classical local models, nonlocal formulations incorporate interactions over a finite distance, which makes them especially suitable for describing processes in which discontinuities play an essential role. For the nonlocal Poisson problem \eqref{eq:Poisson}, well-posedness under standard assumptions on \(K\) is classical. For example, if \(f\in L^2(\Omega)\) and \(b\in L^2(\Omega_I)\), then there exists a unique solution
\(u\in L^2(\Omega\cup\Omega_I)\); see, for instance, \cite{radupLaplacian2012,du2012analysis}. Regularity, however, depends sensitively on both the kernel and the data. For integrable kernels, \cite{foss2016differentiability} proved that if \(f\in C^1(\overline{\Omega})\), then the corresponding solution belongs to \(W^{1,2}(\Omega)\).

The present work is motivated by the observation that discontinuous data arise naturally in nonlocal and peridynamic models, particularly in the study of fracture, interfaces, and material heterogeneities. In such situations, it is not enough to know that a solution exists: one must also understand how discontinuities in the source term influence the continuity, differentiability, and singular structure of the solution. This leads to the first objective of the paper, which is analytical in nature. Assuming that \(f\) is \(C^N\) away from its jump set, we determine how discontinuities propagate through the nonlocal operator and how the behavior of the kernel near the origin and at the horizon endpoints \(\pm\delta\) controls the regularity of the solution.
Our analysis identifies three principal mechanisms:
\begin{itemize}[leftmargin=*]
    \item \emph{Discontinuity inheritance:} under mild conditions ensuring that \(u*K\) is continuous, \(f\) is continuous at \(x\) if and only if \(u\) is continuous at \(x\) (Lemma~\ref{lemma:Cont}). Thus, jump discontinuities in the source are inherited by the solution.
    \item \emph{Singular behavior at the origin:} the behavior of the kernel at the origin determines whether higher derivatives of the solution remain finite near a jump. In particular, if \(K\), or one of its derivatives \(K^{(j)}\), is unbounded at the origin, then the corresponding derivative \(u^{(j+1)}\) blows up at the jump location (Theorems~\ref{thm:K_unbounded}--\ref{thm:unbounded_gen}).
    \item \emph{Horizon effects:} jump discontinuities of \(K\), or of one of its derivatives, at \(\pm\delta\) generate a cascade of derivative jumps in the solution at translated locations (Theorems~\ref{thm:K_jump}--\ref{thm:K_jump1}).
    
A related propagation of discontinuities to higher derivatives has been observed in the peridynamic literature; see,  \cite{silling2003deformation}.
\end{itemize}

It is important to emphasize that kernels unbounded at the origin and kernels with jump discontinuities at \(\pm\delta\) are both common in peridynamic models and  applications \cite{chen2016constructive,bobaru2016handbook,tian2013analysis,SelesonParks2011,du2012analysis}.

The second objective of the paper is computational. From the numerical point of view, a variety of methods have been developed for nonlocal problems of peridynamic type. The work in \cite{nonlocalnumerical} provides an extensive discussion of numerical methods, including finite element, finite difference, and spectral methods, for approximating solutions of nonlocal models. Other notable contributions to the computational treatment of peridynamics include the peridynamic differential operator method \cite{madenci2019peridynamic, oterkus2021peridynamic}, the boundary element method for peridynamics \cite{liang2021boundary}, meshfree methods \cite{silling2005meshfree, seleson2016convergence, sotudeh2025efficient}, and spectral methods \cite{du2016asymptotically, du2017fast, jafarzadeh2020efficient, jafarzadeh2022general, jafarzadeh2024perifast, mousavi2025fast, alali2020fourier, mustapha2025fourier}.

Modeling and resolving fracture and discontinuities is one of the key strengths of peridynamic models. At the same time, the presence of discontinuities requires special care in order to obtain accurate numerical approximations. It is well known that, in the presence of jump discontinuities, standard high-order methods lose their nominal convergence rates unless the discontinuity is explicitly incorporated into the discretization. In particular, Fourier and spectral approximations suffer from the Gibbs phenomenon, while standard finite difference and finite element discretizations for interface and jump problems typically reduce to first-order behavior near the discontinuity unless special interface-aware corrections are used \cite{gotlieb1997gibbs, leveque1994immersed, cao2017superconvergence}. Several studies have therefore focused on finite element approximations for peridynamic models with jump discontinuities; see, for example, \cite{chengunzburger2011, gunzburger2016, xugunzburger2016, xugunzburgerdu2016}. In particular, the work in \cite{chengunzburger2011} demonstrated that a discontinuous Galerkin method can achieve \(\mathcal{O}((\Delta x)^2)\) accuracy both when the location of the jump is known and, through iterative refinement, when it is unknown. However, finite element methods become inefficient when the grid size is much smaller than the horizon \(\delta\).

Spectral methods, by contrast, have demonstrated high accuracy and efficiency for smooth nonlocal problems. They have proven effective for smooth periodic problems in multiple dimensions \cite{alali2020fourier} and for smooth one- and two-dimensional problems on bounded domains \cite{mustapha2025fourier}. Moreover, the spectral methods developed in \cite{jafarzadeh2020efficient, jafarzadeh2022general, jafarzadeh2024perifast, mousavi2025fast} provide highly efficient computational tools for peridynamic and nonlocal models. Their appeal lies in the fact that, when the operator has a convolutional structure, the nonlocal equation can be treated efficiently in the frequency domain.

The main difficulty in applying spectral methods to fracture and discontinuity problems in peridynamics arises from two distinct sources. On the one hand, the Gibbs phenomenon degrades accuracy whenever Fourier-based methods are applied directly to discontinuous data. On the other hand, standard damage models do not always possess a convolutional structure and therefore do not reduce to simple pointwise multiplication in the frequency domain. The latter issue was addressed in \cite{jafarzadeh2022general}, where damage models based on pairwise bond failure were replaced by a local \emph{material integrity} indicator. It was shown in \cite{jafarzadeh2022general} that this formulation enables spectral solvers to reproduce benchmark fracture results accurately, thereby providing an elegant framework for dynamic fracture computations. Nevertheless, because the method still requires the application of the FFT to discontinuous data, its convergence rate remains limited by Gibbs oscillations.

To overcome this limitation, we introduce a framework that combines the regularity and discontinuity analysis developed in this work, as described in Sections~\ref{section:overview} and~\ref{section:discontinuity}, with an accurate and efficient semi-analytic spectral approach. The guiding idea is that the analytical description of the discontinuity structure should not only explain the behavior of solutions, but should also be incorporated directly into the numerical method.

Motivated by the regularity theory developed here, the computational part of the paper introduces a semi-analytic spectral framework for discontinuous nonlocal problems. The central idea is to transform the original problem into an auxiliary \emph{smoothed problem} by successively subtracting explicitly constructed correction functions that capture the discontinuity structure of the solution. The resulting smoothed problem has improved regularity and can therefore be solved much more accurately by a spectral method. The approximation to the original discontinuous solution is then reconstructed by combining the numerical solution of the smoothed problem with the analytic correction terms. In this way, the regularity analysis is used not only to explain the behavior of discontinuous solutions, but also to design an efficient computational method that restores high-order accuracy while retaining the efficiency of spectral solvers.

The remainder of the paper is organized as follows. In Section~\ref{section:overview}, we state the standing assumptions and derive a useful convolution representation of the operator. Section~\ref{section:discontinuity} contains the main regularity and discontinuity results, together with examples illustrating the sharpness of the theory for several commonly used kernels. In Section~\ref{section:semi-analytic}, we introduce the successive smoothing procedure and the resulting semi-analytic spectral method. Section~\ref{section:num} presents numerical experiments demonstrating the accuracy and convergence of the proposed approach.

\section{Preliminaries}\label{section:overview}

\subsection{Assumptions on the integral kernel and the source term}

In this section, we state the standing assumptions on the kernel \(K\) and the source term $f$. 
These
 assumptions generalize a large class of kernels commonly used in peridynamics and its applications to material fracture, see for example \cite{silling2007peridynamic,bobaru2016handbook,tian2013analysis,SelesonParks2011}. Concrete examples of kernels arising in peridynamic models and satisfying assumptions (A1)--(A2) below are given in Sections~\ref{section:discontinuity} and ~\ref{section:semi-analytic}; see \eqref{example_kernel_1}, \eqref{example_kernel_2},  \eqref{example_kernel_3}, \eqref{eq:K_numerical} and \eqref{K_smooth}.

\begin{assump}{A1} The kernel $K$ is a positive, radial and integrable kernel that is compactly supported. Specifically,
    \begin{align}\label{kernel}
    \begin{cases}
    K(x,y)=K(|x-y|)\ge 0, &\text{for all} \;x,y\in\Omega\cup\Omega_I,\\
    K(|x-y|)=0,&\text{for}\; |x-y|\ge \delta,\\
    K\in L^1(\mathbb{R}).
    \end{cases}
\end{align}
\end{assump}

In addition, we assume that \(K\) is smooth away from the origin and the horizon endpoints \(\pm\delta\).
\begin{assump}{A2}
   The kernel $K$ is continuous on $[-\delta,0)\cup (0,\delta]$ and $C^\infty$ on the intervals $(-\delta,0)$ and $(0,\delta)$.    
 \end{assump}

The behavior of $K$ at $0$ and $\pm\delta$ will be discussed in Sections ~\ref{section:discontinuity} and ~\ref{section:semi-analytic} where we will consider several cases of whether $K$ is bounded/unbounded and whether $K$ is continuous or has jumps at $\pm\delta$. 

Without loss of generality, we assume that the source term \(f\) has a jump discontinuity at \(0\) and is otherwise differentiable.
\begin{assump}{B}
\label{Data_assumptions} The source term $f$ has a jump discontinuity at $0$ and $f$ is $C^N$ on the intervals $(a_1,0)$ and $(0,a_2)$, for some $N\in\mathbb{N}$.  Moreover, $f$ and all of its derivatives, $f^{(j)}$ for $j=1,2,\cdots,N$, are  bounded on $(a_1,a_2)$.
\end{assump}
The case of multiple jump discontinuities can be treated similarly by applying the same analysis locally near each jump point. In particular, Assumption~(B) implies that $f\in L^2(\Omega)$.


\subsection{Convolution structure of the operator \texorpdfstring{$L$}{L}}\label{subsec:convolution}

Before analyzing discontinuities in the solution, it is useful to first examine the structure of the nonlocal operator when applied to functions with jump discontinuities. In particular, the operator admits a convolution-type representation that isolates the contribution of the kernel near singular points. This representation separates the effect of discontinuities in the source term from the regularity properties of the kernel itself, and it will play a central role in the analysis below.

This section develops this representation and establishes several auxiliary
results that will be used throughout the remainder of the paper.

Since \(K\in L^1(\mathbb{R})\), we use the decomposition
\begin{align}\label{eq:decomposition}
    Lu=u*K-\alpha u,
\end{align}
where \(\alpha:=\|K\|_{L^1(\mathbb{R})}\), and \(u\) is extended by zero outside \(\Omega\cup\Omega_I\).

\begin{lemma}\label{lemma:Cont}
Let $f\in L^2(\Omega)$ and $u\in L^2(\Omega\cup\Omega_I)$ be such that $L u=f$ in $\Omega$. Assume that $u\ast K\in C(\Omega)$. Then, $f$ is continuous at $x\in\Omega$ if and only if $u$ is continuous at $x$. 
\end{lemma}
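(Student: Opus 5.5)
The plan is to use the decomposition \eqref{eq:decomposition} to rewrite the equation $Lu=f$ in $\Omega$ as a pointwise algebraic relation between $u$ and $f$. Since $K\in L^1(\mathbb{R})$ and $u\in L^2(\Omega\cup\Omega_I)$, extended by zero, the convolution $u*K$ is well defined everywhere. Young's inequality gives $u*K\in L^2$, and because $u\in L^1$ on the bounded set $\Omega\cup\Omega_I$, the integral $\int u(y)K(x-y)\,dy$ converges absolutely for every $x$. The hypothesis adds that $u*K$ is continuous on $\Omega$. The equation then reads
\[
u*K(x)-\alpha u(x)=f(x),\qquad x\in\Omega .
\]
Since $K\ge 0$ and $K\in L^1$, we have $\alpha=\|K\|_{L^1(\mathbb{R})}>0$, excluding the trivial case $K\equiv 0$, for which $L\equiv 0$. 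Solving for $u$ gives
\[
u(x)=\frac{1}{\alpha}\bigl(u*K(x)-f(x)\bigr),\qquad f(x)=u*K(x)-\alpha u(x).
\]

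The key steps, in order, are these. First, justify that the equation holds pointwise rather than merely almost everywhere. Since $u$ and $f$ are a priori only $L^2$ functions, I would fix representatives. The convolution $u*K$ is defined pointwise and is independent of the representative of $u$, so one can choose the representative of $u$ on $\Omega$ defined by $u:=\alpha^{-1}(u*K-f)$; equivalently, interpret the identity $Lu=f$ pointwise, as the lemma implicitly does by speaking of continuity at a point. Second, if $f$ is continuous at $x\in\Omega$, then $u=\alpha^{-1}(u*K-f)$ is a combination of two functions continuous at $x$, so it is continuous at $x$. Third, conversely, if $u$ is continuous at $x$, then $f=u*K-\alpha u$ is continuous at $x$, by the same argument applied to the other identity.

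The only step requiring care is the first one: making sense of pointwise continuity for $L^2$ objects. Once the pointwise identity is in place, both implications are immediate, because $u*K$ is continuous and $\alpha\neq 0$. I expect no analytic obstacle beyond noting $\alpha>0$ and the independence of $u*K$ from the choice of representative of $u$.
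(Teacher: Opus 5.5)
Your proposal is correct and follows the paper's proof, which is just the observation that in $\Omega$ one has $f+\alpha u=u*K$ with $u*K$ continuous and $\alpha>0$. One minor slip does not affect the argument: $u\in L^1$ and $K\in L^1$ only guarantee that $\int u(y)K(x-y)\,dy$ converges absolutely for almost every $x$, not every $x$ (for instance, $u(y)=|y|^{-1/3}$ and $K(z)=|z|^{-2/3}$ near $0$ give divergence at $x=0$), but the hypothesis $u*K\in C(\Omega)$ already supplies the pointwise-defined function you need.
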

\begin{proof}
    This is a direct application of the decomposition in \eqref{eq:decomposition}.
\end{proof}


Thus, whenever \(u*K\in C(\Omega)\), the solution \(u\) and the source \(f\) have exactly the same points of continuity and discontinuity.
The following are examples in which the hypothesis of the lemma is satisfied.
\begin{itemize}
    \item If \(K\in L^2(\mathbb{R})\) and \(u\in L^2(\Omega\cup\Omega_I)\), then \(u*K\in C(\Omega)\).
    \item If $K\in L^1(\mathbb{R})$ and $u\in L^\infty(\mathbb{R})$, then $u\ast K\in C(\Omega)$.
\end{itemize} 

The following  is a result from \cite{foss2016differentiability}.  
\begin{theorem}\label{thm:Radu_mod}
        Suppose that $K$ satisfies Assumption (A1). Let $u\in L^2(\Omega\cup\Omega_I)$ and $f\in L^2(\Omega)$ such that $L u=f$ in $\Omega$. 
        \begin{enumerate}
            \item If $f\in L^\infty(\Omega)$, then $u\in L^\infty(\Omega)$. 
            \item If $f\in W^{1,2}(\Omega)\cap L^\infty (\Omega)$, then $u\in W^{1,2}(\Omega)$.
        \end{enumerate}
\end{theorem}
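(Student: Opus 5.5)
Both parts rest on the decomposition \eqref{eq:decomposition}, rewritten as a fixed-point identity. On \(\Omega\) we have \(u*K-\alpha u=f\), so
\[
u=\frac{1}{\alpha}\bigl(u*K-f\bigr)\quad\text{a.e. in }\Omega .
\]
For part 1 we first observe that \(u*K\in L^\infty\) because \(K\in L^2\) near the origin fails in general. Young's inequality with \(u\in L^2\), \(K\in L^1\) only gives \(L^2\), so boundedness is not immediate.

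The plan is a truncation/maximum-principle argument, in the spirit of \cite{foss2016differentiability}. We split \(u=u\chi_{\{|u|\le M\}}+u\chi_{\{|u|>M\}}\). We also split \(K=K_1+K_2\), where \(K_1=\min(K,R)\) is bounded and \(\|K_2\|_{L^1}=\varepsilon\) is small, which is possible since \(K\in L^1\).

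Then \(u*K_1\) is bounded by \(R\,\|u\|_{L^1}\le R|\Omega\cup\Omega_I|^{1/2}\|u\|_{L^2}\). The term \(u*K_2\) is controlled by \(\varepsilon\) times the essential supremum. To avoid assuming a finite supremum a priori, we work with the truncations \(u_M=\max(-M,\min(u,M))\) and derive
\[
\alpha\|u_M\|_\infty\le \|f\|_\infty+C_R+\varepsilon\|u_M\|_\infty+\|b\|_\infty .
\]
Here the boundary data \(b\) on \(\Omega_I\) is assumed bounded; otherwise we only use \(\|u\|_{L^2(\Omega_I)}\) in the \(K_1\) term. Choosing \(\varepsilon<\alpha\) makes the bound uniform in \(M\), and letting \(M\to\infty\) gives \(u\in L^\infty(\Omega)\).

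The main obstacle is making the truncation step rigorous, because the identity holds for \(u\), not \(u_M\). I would handle it with a level-set argument. At points where \(u>M\), write \(\alpha u=u*K-f\le \|u*K_1\|_\infty+\int K_2(x-y)u(y)\,dy+\|f\|_\infty\). Then estimate the \(K_2\)-integral on the set \(\{u>M\}\) by \(\varepsilon\,\operatorname{ess\,sup}u^+\), and use a Stampacchia-type iteration over increasing levels \(M\).

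For part 2, once \(u\in L^\infty\), the second bullet after Lemma~\ref{lemma:Cont} gives \(u*K\in C(\Omega)\). The key is that convolution commutes with difference quotients. Set \(D_hu(x)=(u(x+h)-u(x))/h\) on \(\Omega'\Subset\Omega\). Then
\[
\alpha D_hu=D_h(u*K)-D_hf,\qquad D_h(u*K)=(D_hu)*K .
\]
The latter identity must account for the zero extension and the fixed data \(b\) near \(\partial\Omega\). We take \(L^2\) norms and bound \(\|(D_hu)*K\|_{L^2}\le \|D_hu\|_{L^2}\|K\|_{L^1}\). This gives \(\alpha\|D_hu\|\) on the left against \(\alpha\|D_hu\|\) on the right, which is borderline and useless as it stands.

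To close the estimate, I would again split \(K=K_1+K_2\) with \(\|K_2\|_1<\alpha\). The smooth part is handled by \(D_h(u*K_1)=u*D_hK_1\). If \(K_1\) is additionally mollified to be Lipschitz, this is bounded in \(L^2\) by \(\|u\|_{L^2}\|K_1'\|_{L^1}\). The small part is absorbed: \((\alpha-\|K_2\|_1)\|D_hu\|_2\le \|D_hf\|_2+C\|u\|_2\). Uniform boundedness of the difference quotients then yields \(u\in W^{1,2}(\Omega)\).

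Boundary contributions near \(a_1,a_2\), where translates leave \(\Omega\), must be handled too. I would absorb these into the constant using the \(L^\infty\) bound from part 1 together with the fact that the jump of \(u\) across \(\partial\Omega\) contributes a bounded function after convolution with \(K_1\). This boundary bookkeeping is the delicate step.
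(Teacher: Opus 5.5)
The paper gives no proof of this statement; it only cites Theorems 3.1 and 4.3 of \cite{foss2016differentiability}. Measured against the statement itself, your proposal has two genuine gaps, and both sit at the steps you flagged.

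\textbf{Part 1.} You never actually derive the inequality $\alpha\|u_M\|_\infty\le\|f\|_\infty+C_R+\varepsilon\|u_M\|_\infty+\|b\|_\infty$. At a point where $u>M$, the term $\int K_2(x-y)u(y)\,dy$ involves the untruncated $u$, and bounding it by $\varepsilon\operatorname{ess\,sup}u^+$ presupposes the conclusion. A Stampacchia iteration does not repair this, because $L$ has order zero. The natural level-set estimate, $\|(u-k)^+\|_{L^2}\le C|\{u>k\}|^{1/2}$ for large $k\ge\|b\|_\infty$, only yields $|\{u>\ell\}|\le C|\{u>k\}|/(\ell-k)^2$. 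That is exponent one: exponential decay of the distribution function, not boundedness.

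The missing idea is to invert rather than truncate. Take $K_1=\min(K,R)$, $K_2=(K-R)^+$ with $\|K_2\|_{L^1}=\varepsilon<\alpha$, and $T_2v:=(v\chi_\Omega)*K_2$. Then $v=u|_\Omega$ satisfies $\alpha v-T_2v=g:=u*K_1+(b\chi_{\Omega_I})*K_2-f$ on $\Omega$, with $\|g\|_{L^\infty(\Omega)}\le R\|u\|_{L^1}+\varepsilon\|b\|_{L^\infty}+\|f\|_{L^\infty}$. Since $\|T_2\|\le\varepsilon$ on both $L^2(\Omega)$ and $L^\infty(\Omega)$, the series $\sum_{n\ge0}\alpha^{-n-1}T_2^ng$ converges in $L^\infty(\Omega)\subset L^2(\Omega)$. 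Injectivity of $\alpha I-T_2$ on $L^2$ identifies its sum with $v$, so $\|u\|_{L^\infty(\Omega)}\le\|g\|_{L^\infty}/(\alpha-\varepsilon)$, with no a priori finiteness needed.

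Also, $b\in L^\infty(\Omega_I)$ is necessary, not a convenience, and your ``otherwise'' fallback fails because $K_2$ sees $b$ too. Take $K=|z|^{-\beta}\chi_{(-\delta,\delta)}(z)$, $f=0$, and $b=|y-a_1|^{-\gamma}$ on the left collar with $\gamma<\tfrac12$ and $\beta>1-\gamma$. The maximum principle gives $u\ge0$, so $\alpha u\ge(b\chi_{\Omega_I})*K\sim c\,(x-a_1)^{1-\beta-\gamma}\to\infty$ as $x\to a_1^+$.

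\textbf{Part 2.} The ``boundary bookkeeping'' is not a technicality. Under (A1) alone it cannot be done, because the conclusion is false. Your mollified $K_1$ absorbs the interface jump $J=[u]_{a_1}$ in the $K_1$-term, but the same jump enters the $K_2$-term as $\frac1h\int_{a_1-h}^{a_1}(u(y+h)-b(y))K_2(x-y)\,dy\to J\,K_2(x-a_1)$. This is bounded in $L^2$ uniformly in $h$ only if $K_2\in L^2$. Restricting to $\Omega'\Subset\Omega$ does not help: $(D_hu)*K_2$ on $\Omega'$ samples $D_hu$ outside $\Omega'$, so the absorption must be done on all of $\{x\in\Omega:x+h\in\Omega\}$, where these strips appear.

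Concretely, let $K=|z|^{-\beta}\chi_{(-\delta,\delta)}(z)$ with $\tfrac12\le\beta<1$, $\delta<a_2-a_1$, $f\equiv-1$ and $b\equiv0$. The maximum principle gives $u\ge0$, so $\alpha u=(u\chi_\Omega)*K+1\ge1$ in $\Omega$. If $u\in W^{1,2}(\Omega)$, then in $\mathcal{D}'(\Omega)$ one has $\alpha u'=(u'\chi_\Omega)*K+u(a_1^+)K(\cdot-a_1)-u(a_2^-)K(\cdot-a_2)$. This forces $u(a_1^+)K(\cdot-a_1)\in L^2$ near $a_1$ with $u(a_1^+)\ge1/\alpha$, contradicting $K\notin L^2$ near $0$.

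Your scheme does close once you add $K\in L^2(\mathbb{R})$ and $b\in W^{1,2}(\Omega_I)$ (hence bounded, in one dimension). By Minkowski, each $O(h)$-strip where the difference quotient straddles a jump of the zero-extended $u$ contributes at most $2\|u\|_{L^\infty}\|K_2\|_{L^2}$. The collar contributes at most $\|K_2\|_{L^1}\|b'\|_{L^2(\Omega_I)}$, and the remainder is absorbed as you propose. These are exactly the hypotheses in force wherever the paper invokes the theorem: $K\in L^2$ in Theorem~\ref{thm:K_unbounded}, $K$ bounded afterwards, and $b\in W^{1,2}(\Omega_I)$.
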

Part (1) is Theorem 3.1 of \cite{foss2016differentiability}, and Part (2) is a mild relaxation of Theorem 4.3 therein. In Part (2), the original assumption is \(f\in C^1(\overline{\Omega})\); however, the same proof remains valid under the weaker hypothesis \(f\in W^{1,2}(\Omega)\cap L^\infty(\Omega)\).

For \(k\in\mathbb{N}\), define
\[
\varphi_k(x)=\frac{x^k}{k!}\chi_{[0,\infty)}(x), \qquad x\in\mathbb{R}.
\]
These functions serve as elementary building blocks for isolating jump contributions. Observe that
\begin{equation}\label{Phi}
    \varphi_{k+1}'(x)=\varphi_k(x), \qquad x\neq 0,
\end{equation}
and therefore, by \eqref{eq:decomposition},
\begin{equation}\label{Lphi}
    (L\varphi_{k+1})'(x)=L\varphi_k(x), \qquad x\neq 0.
\end{equation}
For a given kernel \(K\), the functions \(L\varphi_k\) can often be computed explicitly, and they will be used repeatedly in the sequel.

\begin{lemma}\label{lemma:Deri_K}
    Let $K$ satisfy  Assumption (A1). Then, 
    \begin{enumerate}
        \item $L\varphi_k\in L^\infty_{\text{loc}}(\mathbb{R})$ for all $k$.
        \item $(L\varphi_0)^\prime=K$ almost everywhere.
    \end{enumerate}
\end{lemma}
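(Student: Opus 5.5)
We work directly from the decomposition \eqref{eq:decomposition}, $L\varphi_k=\varphi_k*K-\alpha\varphi_k$, with $\varphi_k$ understood as extended by zero outside $\Omega\cup\Omega_I$. Without loss of generality we take $\varphi_k$ on all of $\mathbb{R}$ with convolution over $\mathbb{R}$; a truncation to $\Omega\cup\Omega_I$ only multiplies $\varphi_k$ by a bounded indicator and does not affect the argument. Both parts then reduce to elementary facts about convolving a locally bounded function with an $L^1$ kernel of compact support.

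\emph{Part 1.} We fix a compact interval $[-R,R]$. The function $\varphi_k$ is bounded on $[-R-\delta,R+\delta]$ by $(R+\delta)^k/k!$, and $K$ vanishes outside $[-\delta,\delta]$. Hence for $|x|\le R$,
\[
|(\varphi_k*K)(x)|=\Big|\int_{-\delta}^{\delta}\varphi_k(x-y)K(y)\,dy\Big|\le \frac{(R+\delta)^k}{k!}\,\alpha .
\]
Together with $|\alpha\varphi_k(x)|\le \alpha R^k/k!$, this gives $L\varphi_k\in L^\infty([-R,R])$ for every $R$, which is the local boundedness claimed. For $k=0$ the same bound holds with $\varphi_0=\chi_{[0,\infty)}$.

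\emph{Part 2.} We write explicitly
\[
(\varphi_0*K)(x)=\int_{\mathbb{R}}\chi_{[0,\infty)}(x-y)K(y)\,dy=\int_{-\infty}^{x}K(y)\,dy=:G(x).
\]
Since $K\in L^1(\mathbb{R})$, the function $G$ is absolutely continuous on bounded intervals. By the Lebesgue differentiation theorem, $G'=K$ almost everywhere. The remaining term $-\alpha\varphi_0$ is constant on $(-\infty,0)$ and on $(0,\infty)$, so its derivative is $0$ for $x\neq0$. Combining the two, $(L\varphi_0)'(x)=K(x)$ for almost every $x$ (every $x\ne 0$ that is a Lebesgue point of $K$). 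Here the derivative is taken pointwise a.e.; the jump of $L\varphi_0$ at $0$, of size $-\alpha$, is deliberately excluded, consistent with \eqref{Lphi} holding only for $x\neq0$.

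\emph{Main subtlety.} The only delicate point is interpreting the derivative in Part 2. It must be the classical a.e.\ derivative rather than the distributional one, since the distributional derivative of $L\varphi_0$ contains $-\alpha\delta_0$. If the truncation of $\varphi_k$ to $\Omega\cup\Omega_I$ is kept, additional jumps appear at $a_2+\delta$. These lie outside the region of interest, and near the origin the convolution identity above is unchanged. We therefore state the result on a neighborhood of $\Omega$, or equivalently work with the untruncated $\varphi_k$ as the later usage suggests.
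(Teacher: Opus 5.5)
Your proof is correct and follows essentially the same route as the paper. Both use the decomposition $L\varphi_k=\varphi_k*K-\alpha\varphi_k$, a local bound on the convolution from $K\in L^1(\mathbb{R})$, and, for $k=0$, differentiation of a variable-limit integral of $K$ (your $\int_{-\infty}^{x}K$) together with $\varphi_0'=0$ off the origin. Your bound on $[-R,R]$ is in fact more accurate than the paper's intermediate claim that $\varphi_k*K\in L^\infty(\mathbb{R})$, which fails for $k\ge 1$.

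One small slip is in your side remark about truncation. With the truncated $\varphi_0=\chi_{[0,a_2+\delta)}$ one gets $(L\varphi_0)'(x)=K(x)-K(x-a_2-\delta)$ a.e. The extra term is supported on all of $(a_2,a_2+2\delta)$, not just at the point $a_2+\delta$. So the identity is guaranteed on $\Omega$, but not on a neighborhood of it.
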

\begin{proof}
    \begin{enumerate}
        \item Observe that
    \begin{align}\label{eq:Deri_K_1}
        \begin{split}
            \varphi_k\ast K(x)=\int_{-\infty}^\infty \varphi_k(y)K(x-y)dy=\int_0^\infty \frac{y^k}{k!} K(y-x)dy=\int_{-x}^\infty \frac{(z+x)^k}{k!}K(z)dz\\
        =\int_{-x}^\delta \frac{(z+x)^k}{k!} K(z)dz.
        \end{split}     
    \end{align}
        Since $K\in L^1(\mathbb{R})$, it follows from \eqref{eq:Deri_K_1} that $\varphi_k\ast K\in L^\infty(\mathbb{R})$. Note that $\varphi_k\in L^\infty_{\text{loc}}(\mathbb{R})$ by definition. From the decomposition, $L\varphi_k=\varphi_k\ast K-\alpha \varphi_k$, we obtain that $L\varphi_k\in L^\infty_{\text{loc}}(\mathbb{R})$.
        \item In the case that $k=0$,
        \begin{align}\label{eq:Deri_K_2}
        \varphi_0\ast K(x)=\int_{-x}^\delta K(z)dz.
    \end{align}
        
        Since $K\in L^1(\mathbb{R})$, from the Fundamental theorem of Calculus, $(\varphi_0\ast K)^\prime=K$ almost everywhere. Note that $\varphi_0^\prime(x)=0$ for all $x\ne 0$.  Differentiating the identity \(L\varphi_0=\varphi_0\ast K-\alpha\varphi_0\) for \(x\neq 0\) completes the proof. 
    \end{enumerate}  
\end{proof}
We denote the jump of a function $\psi$ at $p$ by $[\psi]_p$, specifically,
\begin{align*}
    [\psi]_p=\lim_{x\to p^+}\psi(x)-\lim_{x\to p^-}\psi(x),
\end{align*}
given that the right and left limits exist.

\subsection{Differentiation under the integral}\label{subsection:diff}
The next two lemmas provide the differentiation tools needed in the proofs below. Lemma~\ref{lemma: Leibniz_easy} is the classical Leibniz rule on \(\mathbb{R}\), while Lemma~\ref{lemma:Leibniz} gives a measure-theoretic version suitable for differentiating convolution terms under minimal regularity assumptions.

\begin{lemma}[Leibniz's theorem on $\mathbb{R}$]\label{lemma: Leibniz_easy}
    Let $g(x,y)$ be a function such that both $g$ and $\frac{\partial g}{\partial x}$ are continuous in $x$ and $y$ in some $xy$-domain containing $\xi(x)\le y\le \zeta(x),\; c\le x\le d$. Suppose that $\xi(x)$ and $\zeta(x)$ are continuous and have continuous derivatives for $c\le x\le d$. Then for $c\le x\le d$,
    \begin{align*}
        \frac{d}{dx}\left(\int_{\xi(x)}^{\zeta(x)}g(x,y)dy\right)=g(x,\zeta(x))-g(x,\xi(x))+\int_{\xi(x)}^{\zeta(x)}\frac{\partial g}{\partial x}(x,y)dy
    \end{align*}
\end{lemma}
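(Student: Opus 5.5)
The plan is to reduce to a fixed-interval integral, differentiate under the integral sign, and then use the chain rule. Introduce
\[
G(x,s,t)=\int_{s}^{t} g(x,y)\,dy ,
\]
so that the quantity to be differentiated is $F(x)=G(x,\xi(x),\zeta(x))$. I will compute the three partial derivatives of $G$, show that they are continuous, and then combine them with the chain rule.

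First, the partial derivatives in $s$ and $t$ come directly from the fundamental theorem of calculus. Since $g$ is continuous in $y$, we get $\partial_t G = g(x,t)$ and $\partial_s G=-g(x,s)$. Both are continuous in $(x,s,t)$ because $g$ is jointly continuous.

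Second, for $\partial_x G$ with fixed limits $s,t$, apply the mean value theorem to the difference quotient:
\[
\frac{G(x+h,s,t)-G(x,s,t)}{h}=\int_s^t \partial_x g(x+\theta h,y)\,dy, \qquad \theta=\theta(h,y)\in(0,1).
\]
Restrict to a compact set containing the region $\{c\le x\le d,\ \xi(x)\le y\le\zeta(x)\}$; this region is compact because $\xi$ and $\zeta$ are continuous. On that compact set $\partial_x g$ is uniformly continuous, so the integrand converges to $\partial_x g(x,y)$ uniformly in $y$. Hence $\partial_x G=\int_s^t \partial_x g(x,y)\,dy$. The same uniform continuity shows that this partial derivative is jointly continuous in $(x,s,t)$. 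Since all three partials are continuous, $G$ is $C^1$, and the multivariable chain rule applies.

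Third, the chain rule gives
\[
F'(x)=\partial_xG+\partial_tG\,\zeta'(x)+\partial_sG\,\xi'(x).
\]
This is the stated formula, with the boundary terms multiplied by $\zeta'(x)$ and $\xi'(x)$. The statement as printed omits these factors; it is the special case $\zeta'=\xi'=1$, or else a typo for the standard rule.

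The main obstacle is a technical point in the second step. The values $s,t$ near $\xi(x),\zeta(x)$, together with $x\pm h$, must stay inside the domain where $g$ and $\partial_x g$ are continuous. I will handle this by using the hypothesis that this domain contains the closed region, and by working in a slightly enlarged compact neighborhood of that region. At the endpoints $x=c,d$ only one-sided derivatives are taken, so this neighborhood is only needed in the $y$-direction and on the interior side in $x$.
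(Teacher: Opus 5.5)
Your argument is correct, and so is your diagnosis of the statement. The paper gives no proof of this lemma; it quotes it as the classical Leibniz rule, and the factors $\zeta'(x)$ and $\xi'(x)$ on the boundary terms were dropped along the way.

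As printed, the identity is false in general. Take $g\equiv 1$, $\xi\equiv 0$, $\zeta(x)=x$ on $[1,2]$: the left side is $1$ and the right side is $0$.

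Your proof uses the fundamental theorem of calculus for $\partial_sG$ and $\partial_tG$. For $\partial_xG$ it uses the mean value theorem together with uniform continuity of $\partial_x g$ on a compact set. It then establishes joint continuity of the three partials and applies the chain rule. This is the standard proof of
\[
\frac{d}{dx}\int_{\xi(x)}^{\zeta(x)}g(x,y)\,dy=g(x,\zeta(x))\,\zeta'(x)-g(x,\xi(x))\,\xi'(x)+\int_{\xi(x)}^{\zeta(x)}\frac{\partial g}{\partial x}(x,y)\,dy,
\]
which coincides with the printed formula when $\xi'=\zeta'=1$. That is exactly how the paper uses the lemma: the limits are $x-\delta$, $x$, $x+\delta$ in the proofs of Theorems~\ref{thm:K_unbounded1} and~\ref{thm:K_jump}. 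So the omission does no damage downstream.

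One thing to state explicitly when you write it up: you read the ``$xy$-domain'' $D$ as an \emph{open} set containing the compact region $\{c\le x\le d,\ \xi(x)\le y\le\zeta(x)\}$. This guarantees that
\[
U=\{(x,s,t):\{x\}\times[\min(s,t),\max(s,t)]\subset D\}
\]
is an open set containing every point $(x,\xi(x),\zeta(x))$. It also guarantees that $G$ is defined and $C^1$ on $U$, so the chain rule applies, one-sidedly at $x=c,d$. This is the standard reading of the hypothesis, so it only needs to be stated; it is not a gap.
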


\begin{lemma}[Leibniz's theorem: Measure theory statement]\label{lemma:Leibniz}
    Let $X$ be an open set of $\mathbb{R}$ and $D$ be a measure space. Suppose that a function $g:X\times D\to \mathbb{R}$ satisfies the following conditions:
    \begin{enumerate}
        \item $g(x,\omega)$ is jointly measurable in 
$(x,\omega)$, and is integrable over $\omega$, for almost $x\in X$ held fixed.
        \item For almost $\omega\in D$, $g(x,\omega)$ is an absolutely continuous function of $x$.
        \item $\partial g/\partial x$ is locally integrable, that is for all compact intervals $[a,b]$ contained in $X$
        \begin{align*}
            \int_a^b \int_{D}\left|\frac{\partial g}{\partial x}(x,\omega)\right|d\omega dx<\infty.
        \end{align*}
    \end{enumerate}
    Then $\int_D g(x,\omega) d\omega$ is an absolutely continuous function of $x$, and for almost $x\in X$, its derivative exists and is given by 
    \begin{align*}
        \frac{d}{dx}\int_{D}g(x,\omega)d\omega=\int_D\frac{dg}{dx}(x,\omega)d\omega.
    \end{align*}
\end{lemma}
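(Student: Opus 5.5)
The plan is to reduce the statement to the fundamental theorem of calculus for Lebesgue integrals, using Fubini--Tonelli to exchange the order of integration. Since absolute continuity and almost-everywhere differentiability are local properties, it suffices to work on a fixed compact interval \([a,b]\subset X\). We may also take \(a\) to be a point at which \(\omega\mapsto g(a,\omega)\) is integrable over \(D\); this is possible by condition 1, because such points form a set of full measure and we can shrink the interval slightly if needed. Write
\[
G(x)=\int_D g(x,\omega)\,d\omega,\qquad H(t)=\int_D \frac{\partial g}{\partial x}(t,\omega)\,d\omega .
\]

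First, I will check that \(\partial g/\partial x\) is a jointly measurable function on \([a,b]\times D\). Define it as \(\limsup_{n\to\infty} n\bigl(g(x+1/n,\omega)-g(x,\omega)\bigr)\). Each difference quotient is jointly measurable by condition 1, so the \(\limsup\) is as well. By condition 2, for almost every \(\omega\) this \(\limsup\) agrees with the true derivative for almost every \(x\). By Tonelli, the set where the two disagree is a null set in the product measure, so the choice of representative does not matter. Condition 3 then says \(\partial g/\partial x\in L^1([a,b]\times D)\). Fubini therefore applies: \(H\) is defined for almost every \(t\), and \(H\in L^1([a,b])\).

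Next, for almost every \(\omega\), condition 2 and the fundamental theorem of calculus for absolutely continuous functions give
\[
g(x,\omega)=g(a,\omega)+\int_a^x \frac{\partial g}{\partial x}(t,\omega)\,dt,\qquad x\in[a,b].
\]
Integrate this identity over \(D\). The first term is integrable by the choice of \(a\). In the second term, swap the order of integration using Fubini, justified by the integrability established above. This gives, for every \(x\in[a,b]\),
\[
G(x)=G(a)+\int_a^x H(t)\,dt.
\]
In particular the right-hand side is finite, so \(G(x)\) is well defined and \(g(x,\cdot)\) is integrable for every \(x\in[a,b]\), not merely almost every \(x\). Since \(H\in L^1([a,b])\), the indefinite integral is absolutely continuous. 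The Lebesgue differentiation theorem then gives \(G'(x)=H(x)\) for almost every \(x\in[a,b]\), which is exactly the claimed formula. Covering \(X\) by countably many such intervals yields the result on all of \(X\).

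I expect the main obstacle to be the measure-theoretic bookkeeping rather than any analytic estimate. One point is making sense of \(\partial g/\partial x\) as a jointly measurable function, so that condition 3 and Fubini can legitimately be invoked. The other is choosing the base point \(a\) so that \(G(a)\) is finite. I would handle the first with the \(\limsup\) representative described above, and the second with condition 1 as indicated.
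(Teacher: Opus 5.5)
The paper does not prove this lemma. It quotes it as a standard measure-theoretic form of the Leibniz rule and uses it only as a tool, to justify \((u_0*K)'=u_0'*K\) and \((u_1*K')'=u_1'*K'\) in the proofs of Theorems~\ref{thm:K_unbounded} and~\ref{thm:K_unbounded1}. So there is no argument to compare against. Your proof is the standard one and it is correct: the fundamental theorem of calculus in \(x\) for almost every \(\omega\), Fubini to exchange the \(t\)- and \(\omega\)-integrals, and Lebesgue differentiation of \(G(x)=G(a)+\int_a^x H(t)\,dt\).

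You should add one point you did not flag. \(D\) is only assumed to be a measure space, and Tonelli/Fubini can fail without \(\sigma\)-finiteness. Here \(\sigma\)-finiteness comes for free from the hypotheses:
\begin{itemize}
\item Pick a countable dense set \(\{x_k\}\subset X\) of points at which \(g(x_k,\cdot)\) is integrable.
\item Let \(S\) be the union of the sets \(\{\omega: g(x_k,\omega)\neq 0\}\). This is a countable union of \(\sigma\)-finite sets.
\item Let \(N\) be a measurable null set off which \(g(\cdot,\omega)\) is absolutely continuous.
\end{itemize}
For \(\omega\notin S\cup N\), \(g(\cdot,\omega)\) is continuous and vanishes on a dense set. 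Hence it vanishes identically, together with its derivative. Every integral over \(D\) may therefore be taken over the \(\sigma\)-finite set \(S\cup N\).

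A related detail concerns your \(\limsup\) representative. Applying Tonelli to the set where it differs from \(\partial g/\partial x\) requires that set to be measurable. This holds because, for \(\omega\notin N\), \(g(\cdot,\omega)\) is continuous, so the Dini derivates can be computed along rational increments. Alternatively, you can read condition~3 as referring to your jointly measurable representative. With these remarks added, the argument is complete.
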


 .

\section{Effects of kernel regularity on the solution}
\label{section:discontinuity}

In this section, we investigate how the smoothness and singularity of the kernel influence the behavior of the solution near discontinuity points of the source term. We consider several cases, depending on whether the kernel \(K\) is bounded or unbounded, and whether \(K\) is continuous or has jump discontinuities at \(\pm\delta\). These distinctions are not merely technical. In particular, the kernel \(S\) in Example~1 is a commonly used peridynamic-type kernel, see for example \cite{silling2007peridynamic,silling,chen2016constructive,tian2013analysis,SelesonParks2011}, which is singular at the origin and discontinuous at the horizon. Thus, the kernel properties studied below arise naturally in models used in practice.


The results of this section show that these kernel features can have substantial consequences for solution regularity. For kernels that are singular at the origin, a jump discontinuity in the source term implies that either the source term itself or the solution must develop unbounded derivatives near the jump point. This is relevant both analytically and computationally, since it shows that discontinuous data may generate stronger local singularities than the classical local theory might suggest. Similarly, discontinuities of the kernel at the horizon \(\pm\delta\) produce a cascade of local drops in regularity at translated points \(\pm k\delta\). While this behavior follows naturally from the nonlocal structure of the operator, it may be viewed as an unintended modeling effect introduced by the particular kernel choice.

To motivate the different assumptions used throughout this section, we introduce several representative kernels that arise in peridynamic models.

\begin{example}
Let
\begin{align}\label{example_kernel_1}
    S(x,y)=S(|x-y|)=\frac{c_S}{|x-y|^\beta}\chi_{(-\delta,\delta)}(x-y),
    \qquad
    c_S=\frac{3-\beta}{\delta^{3-\beta}},
    \qquad 0<\beta<1.
\end{align}
This kernel is singular at \(x=0\) and has jump discontinuities at \(x=\pm\delta\); see Figure~\ref{fig:kernel_1}. It is representative of commonly used peridynamic kernels with truncated power-law behavior, see for instance \cite{chen2016constructive,bobaru2016handbook,tian2013analysis,SelesonParks2011}.
\end{example}

\begin{example}
Let
\begin{align}\label{example_kernel_2}
    P(x,y)=P(|x-y|)=c_P\left(1-\frac{|x-y|^\beta}{\delta^\beta}\right)\chi_{(-\delta,\delta)}(x-y),
    \qquad
    c_P=\frac{3(3+\beta)}{\beta\delta^3},
    \qquad 0<\beta<1.
\end{align}
This kernel, illustrated in Figure~\ref{fig:kernel_2}, is bounded and continuous, while its derivative is unbounded at \(x=0\):
\[
|P'(0^\pm)|=\infty.
\]
\end{example}

\begin{example}
Let
\begin{align}\label{example_kernel_3}
   G(x,y)=G(|x-y|)=c_G\exp\left(-\frac{|x-y|^2}{\delta^2}\right)\chi_{(-\delta,\delta)}(x-y),
   \qquad
   c_G=\frac{4e}{\delta^3\bigl(e\sqrt{\pi}\,\erf(1)-2\bigr)}.
\end{align}
This kernel, illustrated in Figure~\ref{fig:kernel_3}, is smooth on \(\mathbb{R}\setminus\{\pm\delta\}\) and has jump discontinuities at \(x=\pm\delta\).
\end{example}

\begin{example}
The following kernel belongs to \(C_c^\infty(\mathbb{R})\) and is supported on \([-\delta,\delta]\):
\begin{align}\label{K_smooth}
    Q(x,y)=Q(|x-y|)=c_Q\exp\left(-\frac{1}{1-|x-y|^2/\delta^2}\right)\chi_{(-\delta,\delta)}(x-y),
\end{align}
where
\[
c_Q=
\left(
\delta^3\int_0^1 r^2\exp\!\left(-\frac{1}{1-r^2}\right)\,dr
\right)^{-1}.
\]
\end{example}
\subsection{Unboundedness of the kernel and its higher derivatives}
\label{subsection:unboundedness}

We first consider the case in which the kernel itself is unbounded at the origin. In this setting, the singular behavior of \(K\) is transferred directly to the first derivative of the solution at the jump point.

\begin{theorem}\label{thm:K_unbounded}
Assume that the kernel \(K\) satisfies Assumptions (A1)--(A2), and, in addition, is singular at the origin and belongs to \(L^2(\mathbb{R})\).
\begin{enumerate}
    \item Suppose that \(f\) satisfies Assumption (B) and \(b\in W^{1,2}(\Omega_I)\). Then, for the solution \(u\in L^2(\Omega\cup\Omega_I)\) of \eqref{eq:Poisson}, one has
    \[
    |u'(0^\pm)|=\infty.
    \]
    \item Suppose that \(u\) satisfies Assumption (B). Then, for \(f:=Lu\), one has
    \[
    |f'(0^\pm)|=\infty.
    \]
\end{enumerate}
\end{theorem}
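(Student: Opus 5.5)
The plan is to isolate the jump at the origin using the building block \(\varphi_0\) and then exploit Lemma~\ref{lemma:Deri_K}(2), \((L\varphi_0)'=K\). In part (1), set \(J:=[f]_0\neq 0\). By Theorem~\ref{thm:Radu_mod}(1), \(u\in L^\infty\). Since \(K\in L^2\), Lemma~\ref{lemma:Cont} applies, so \(u\) is continuous exactly where \(f\) is. The decomposition \(Lu=u*K-\alpha u\) then shows \([u]_0=-[f]_0/\alpha\neq 0\), because \(u*K\) is continuous.

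Define \(v:=u-[u]_0\varphi_0\), extended by zero outside \(\Omega\cup\Omega_I\) as in \eqref{eq:decomposition}. Then \(v\) has no jump at \(0\), and
\[
Lv=f-[u]_0L\varphi_0=:g\quad\text{in }\Omega .
\]
The key intermediate claim is that \(v\) is differentiable near \(0\) with \(v'\) bounded there. Granting this, differentiate \(u*K-\alpha u=f\) on each side of \(0\):
\[
\alpha u'=(u*K)'-f'.
\]
Here \(f'\) is bounded by Assumption~(B). Writing \(u*K=v*K+[u]_0\,\varphi_0*K\), and using \((\varphi_0*K)'=K\) from \eqref{eq:Deri_K_2}, we get
\[
(u*K)'(x)=(v*K)'(x)+[u]_0K(x).
\]
The term \(v*K\) has bounded derivative near \(0\). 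The argument is to write \((v*K)'=v'*K\) on \(\Omega\) via Lemma~\ref{lemma:Leibniz}, with \(v'\in L^\infty\) locally. Jumps of \(v\) away from \(0\), or at the boundary of \(\Omega\cup\Omega_I\), contribute only terms of the form \(c\,K(x-p)\) with \(|x-p|\) bounded away from \(0\); these are bounded by Assumption (A2) for \(x\) near \(0\). Since \(K(x)\to\infty\) as \(x\to0^\pm\) (the singularity), it follows that \(|u'(x)|\to\infty\) as \(x\to 0^\pm\).

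The main obstacle is the intermediate claim: regularity of \(v\) away from the jump, i.e.\ \(v\in W^{1,2}\) near \(0\) with bounded derivative. I would first try Theorem~\ref{thm:Radu_mod}(2) applied to the problem \(Lv=g\) with constraint \(b-[u]_0\varphi_0\). The difficulty is that \(g'=f'-[u]_0K\) is itself unbounded, and even \(g\) may fail to be \(W^{1,2}\) if \(K\notin L^2\); the hypothesis \(K\in L^2\) is exactly what makes \(g\in W^{1,2}(\Omega)\cap L^\infty\), so \(v\in W^{1,2}\). To avoid a circular bound on \(v'\), it is cleaner to argue directly. Differentiate \(v*K-\alpha v=g+\text{(boundary terms)}\) to get
\[
\alpha v'=v'*K-g'=v'*K-f'+[u]_0K,
\]
and observe that the unbounded piece \([u]_0K\) cancels in the final formula for \(u'\). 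So I would simply start from \(\alpha u'=(u*K)'-f'\) and show directly that \((u*K)'-[u]_0K\) is bounded near \(0\). This uses \(u\in W^{1,2}\) on each side, which comes from Theorem~\ref{thm:Radu_mod}(2) applied to \(v\), together with \(K\in L^2\) to bound \(v'*K\) by Cauchy--Schwarz.

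Part (2) is easier because \(u\) is given. We have \(f=Lu=u*K-\alpha u\), with \(u\in L^\infty\), \(u'\) bounded off \(0\), and \([u]_0\neq0\). The same splitting \(u=v+[u]_0\varphi_0\), with \(v\) piecewise \(C^1\), bounded derivative, and continuous at \(0\), gives
\[
f'=v'*K+(\text{bounded jump terms})+[u]_0K-\alpha u'.
\]
Every term except \([u]_0K\) is bounded near \(0\), so \(|f'(0^\pm)|=\infty\).
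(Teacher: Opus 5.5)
This is essentially the paper's proof. You subtract $[u]_0\varphi_0$ and use $K\in L^2(\mathbb{R})$ to place $f-[u]_0L\varphi_0$ in $W^{1,2}(\Omega)\cap L^\infty(\Omega)$, so Theorem~\ref{thm:Radu_mod}(2) gives $v\in W^{1,2}$. You then differentiate $v*K$ via Lemma~\ref{lemma:Leibniz}, bound $v'*K$ by Cauchy--Schwarz, and let the surviving term $[u]_0K$ force the blow-up; part (2) matches the paper as well. Your treatment of possible jumps of $v$ at $a_1,a_2$ and at the outer endpoints, via bounded terms $cK(x-p)$, is if anything more careful than the paper's assertion that $u_0\in W^{1,2}(\Omega\cup\Omega_I)$.

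Do discard the initially stated ``key intermediate claim'' that $v'$ is bounded near $0$, and also the remark that $[u]_0K$ ``cancels''. Since $v'=u'$ off the origin, $v'$ is exactly what blows up, and only $v'\in L^2$ is needed, as in your corrected version.
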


\begin{proof}
\begin{enumerate}
    \item
    Since \(u\in L^2(\mathbb{R})\) by zero extension and \(K\in L^2(\mathbb{R})\), the convolution \(u*K\) is continuous on \(\mathbb{R}\). From the decomposition
    \[
    u*K-\alpha u=f \qquad \text{in } \Omega,
    \]
    it follows that \(u\) has a jump discontinuity at \(0\) of magnitude
    \[
    c_0:=[u]_0=-\frac{1}{\alpha}[f]_0,
    \]
    where \(0<|c_0|<\infty\). Moreover, \(u\) is continuous on \(\Omega\setminus\{0\}\).

    Define
    \begin{equation}\label{eq:u0_thm_unbounded}
        u_0:=u-c_0\varphi_0.
    \end{equation}
    Then \(u_0\in C^0(\Omega)\) and \(u_0\) solves
    \begin{align}\label{eq:u0_solve_thm_unbounded}
        \begin{cases}
            Lu_0=f_0:=f-c_0L\varphi_0 & \text{in } \Omega,\\
            u_0=b-c_0\varphi_0 & \text{in } \Omega_I.
        \end{cases}
    \end{align}

    We now show that \(u_0\) has enough regularity to differentiate the convolution term. Since
    \(f_0=f-c_0L\varphi_0\), \(f\in L^\infty(\Omega)\) by Assumption (B), and \(L\varphi_0\in L^\infty(\Omega)\) by Lemma~\ref{lemma:Deri_K}, it follows that \(f_0\in L^\infty(\Omega)\). Moreover,
    \[
    f_0'=f'-c_0K
    \]
    almost everywhere in \(\Omega\). Since \(f'\) is  bounded on each side of the origin and \(K\in L^2(\mathbb{R})\), it follows that \(f_0'\in L^2(\Omega)\). Hence \(f_0\in W^{1,2}(\Omega)\cap L^\infty(\Omega)\). By Theorem~\ref{thm:Radu_mod}, we obtain \(u_0\in W^{1,2}(\Omega)\). Since \(b\in W^{1,2}(\Omega_I)\), it follows that
    \[
    u_0\in W^{1,2}(\Omega\cup\Omega_I).
    \]
    In particular, \(u_0\) and its weak derivative \(u_0'\) both belong to \(L^2(\Omega\cup\Omega_I)\).

    Consider
    \[
    (u_0*K)(x)=\int_{-\delta}^{\delta}u_0(x-y)K(y)\,dy
    =\int_{-\delta}^{\delta}g(x,y)\,dy,
    \]
    where \(g(x,y):=u_0(x-y)K(y)\). For each fixed \(x\in\Omega\), Hölder's inequality gives
    \[
    \|g(x,\cdot)\|_{L^1(-\delta,\delta)}
    \le \|u_0(x-\cdot)\|_{L^2(-\delta,\delta)}\|K\|_{L^2(\mathbb{R})}
    \le \|u_0\|_{L^2(\Omega\cup\Omega_I)}\|K\|_{L^2(\mathbb{R})}<\infty.
    \]
    For fixed \(y\in(-\delta,\delta)\), the pointwise derivative
    \[
    \frac{\partial g}{\partial x}(x,y)=u_0'(x-y)K(y)
    \]
    exists for almost every \(x\), and is integrable in \(x\) because \(u_0'\in L^2(\Omega\cup\Omega_I)\). Thus \(g\) is absolutely continuous in \(x\). Furthermore, for every compact interval \([a,b]\subset\Omega\),
    \[
    \int_a^b\int_{-\delta}^{\delta}\left|\frac{\partial g}{\partial x}(x,y)\right|\,dy\,dx
    =\int_a^b (|u_0'|*|K|)(x)\,dx
    \le (b-a)\|u_0'\|_{L^2(\Omega\cup\Omega_I)}\|K\|_{L^2(\mathbb{R})}<\infty.
    \]
    Therefore, by Lemma~\ref{lemma:Leibniz},
    \[
    \frac{d}{dx}(u_0*K)(x)=\int_{-\delta}^{\delta}\frac{\partial g}{\partial x}(x,y)\,dy
    =u_0'*K(x)
    \]
    for almost every \(x\in\Omega\). Since \(u_0'\in L^2(\mathbb{R})\) and \(K\in L^2(\mathbb{R})\), the convolution \(u_0'*K\) is continuous on \(\mathbb{R}\).

    On the other hand, from \eqref{eq:u0_solve_thm_unbounded} and Lemma~\ref{lemma:Deri_K},
    \[
    f_0'=f'-c_0K
    \]
    almost everywhere. Because \(K\) is unbounded at \(0\) and \(0<|c_0|<\infty\), we obtain
    \[
    |f_0'(0^\pm)|=\infty.
    \]
    Using the decomposition
    \[
    u_0*K-\alpha u_0=f_0,
    \]
    and the continuity of \((u_0*K)'\), it follows that
    \[
    |u_0'(0^\pm)|=\infty.
    \]
    Finally, since \(u=u_0+c_0\varphi_0\) by \eqref{eq:u0_thm_unbounded}, we conclude that
    \[
    |u'(0^\pm)|=\infty.
    \]

    \item
    As in Part (1), since \(u\in L^2(\mathbb{R})\) by zero extension and \(K\in L^2(\mathbb{R})\), the convolution \(u*K\) is continuous on \(\mathbb{R}\). From
    \[
    u*K-\alpha u=f \qquad \text{in } \Omega,
    \]
    it follows that \(f\) has a jump discontinuity at \(0\) of magnitude
    \[
    [f]_0=-\alpha [u]_0,
    \]
    and \(f\) is continuous on \(\Omega\setminus\{0\}\).

    Let
    \[
    c_0:=[u]_0=-\frac{1}{\alpha}[f]_0,
    \]
    and define \(u_0:=u-c_0\varphi_0\). Since \(u\) satisfies Assumption (B), it follows that \(u_0\) is absolutely continuous and hence belongs to \(W^{1,2}(\Omega\cup\Omega_I)\). Therefore, by Lemma~\ref{lemma:Leibniz},
    \[
    (u_0*K)'=u_0'*K
    \]
    almost everywhere. Since \(u_0'\in L^2(\mathbb{R})\) and \(K\in L^2(\mathbb{R})\), the function \((u_0*K)'\) is continuous on \(\mathbb{R}\).

    Let \(f_0:=f-c_0L\varphi_0\). From the decomposition
    \[
    u_0*K-\alpha u_0=f_0,
    \]
    it follows that
    \[
    |f_0'(0^\pm)|<\infty.
    \]
    On the other hand, Lemma~\ref{lemma:Deri_K} gives
    \[
    f_0'=f'-c_0K
    \]
    almost everywhere. Since \(K\) is unbounded at \(0\) and \(0<|c_0|<\infty\), we conclude that
    \[
    |f'(0^\pm)|=\infty.
    \]
\end{enumerate}
\end{proof}

Theorem~\ref{thm:K_unbounded} is illustrated by the  kernel $S$ given in \eqref{example_kernel_1}. 
A direct computation gives
\begin{align*}
    L_S\varphi_0(x)=
    \begin{cases}
        \dfrac{c_S}{1-\beta}\bigl(x^{1-\beta}-\delta^{1-\beta}\bigr), & 0\le x<\delta,\\[1ex]
        \dfrac{c_S}{1-\beta}\bigl(\delta^{1-\beta}-(-x)^{1-\beta}\bigr), & -\delta<x<0,\\[1ex]
        0, & \text{otherwise}.
    \end{cases}
\end{align*}
It is immediate that
\[
|(L_S\varphi_0)'(0^\pm)|=\infty.
\]

\begin{figure}[h!]
\centering
\includegraphics[scale=0.5]{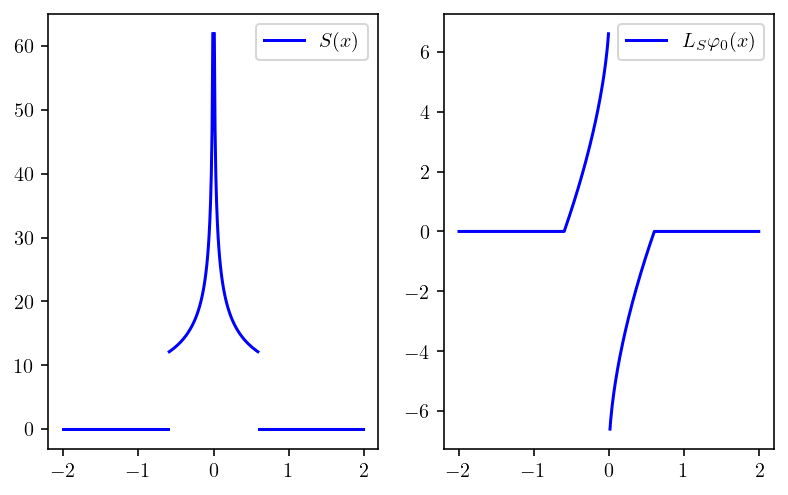}
\caption{Graphs of the kernel \(S(x)\) (left) and \(L_S\varphi_0(x)\) (right) for \(\delta=0.6\) and \(\beta=0.4\).}
\label{fig:kernel_1}
\end{figure}

We next consider kernels that remain bounded at the origin, but whose first derivative is unbounded there. In this case, the singularity is transferred to the second derivative of the solution.

\begin{theorem}\label{thm:K_unbounded1}
Assume that the kernel \(K\) satisfies Assumptions (A1)--(A2), and, in addition, that \(K \in L^\infty(\mathbb{R})\), that \(K'\) is singular at the origin, and that \(K' \in L^2(\mathbb{R})\).
\begin{enumerate}
    \item Suppose that \(f\) satisfies Assumption (B) and \(b\in W^{1,2}(\Omega_I)\). Then, for the solution \(u\in L^2(\Omega\cup\Omega_I)\) of \eqref{eq:Poisson}, one has
    \[
    |u''(0^\pm)|=\infty.
    \]
    \item Suppose that \(u\) satisfies Assumption (B). Then, for \(f:=Lu\), one has
    \[
    |f''(0^\pm)|=\infty.
    \]
\end{enumerate}
\end{theorem}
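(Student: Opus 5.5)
The plan is to mirror the proof of Theorem~\ref{thm:K_unbounded}, but with one more subtraction step, so that the singularity of \(K'\) is exposed at the level of second derivatives. For Part (1), since \(K\in L^\infty\) has compact support, \(K\in L^2(\mathbb{R})\). Hence \(u*K\) is continuous, and by Lemma~\ref{lemma:Cont} the function \(u\) has a single jump at \(0\) of size \(c_0=-[f]_0/\alpha\). Setting \(u_0=u-c_0\varphi_0\), we get \(Lu_0=f_0:=f-c_0L\varphi_0\) in \(\Omega\), and by Lemma~\ref{lemma:Deri_K} we have \(f_0'=f'-c_0K\) a.e. Now, however, \(K\) is bounded, so \(f_0'\in L^\infty\). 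Moreover \(f_0\) is continuous at \(0\), so \(f_0\in W^{1,2}(\Omega)\cap L^\infty(\Omega)\). Theorem~\ref{thm:Radu_mod} then gives \(u_0\in W^{1,2}(\Omega)\), hence \(u_0\in W^{1,2}(\Omega\cup\Omega_I)\) using \(b\in W^{1,2}(\Omega_I)\). Exactly as before, Lemma~\ref{lemma:Leibniz} yields \((u_0*K)'=u_0'*K\), which is continuous. Differentiating \(u_0*K-\alpha u_0=f_0\) gives \(\alpha u_0'=u_0'*K-f'+c_0K\) a.e. The right side has one-sided limits at \(0\), and its jump is \(-[f']_0\), because \(K\) is continuous at \(0\) in the sense that \(K(0^+)=K(0^-)\) by radial symmetry. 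So \(u_0'\) has a finite jump \(c_1:=[u_0']_0\) and is otherwise continuous near \(0\).

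Next, subtract this jump: set \(u_1:=u_0-c_1\varphi_1\), so that \(u_1'\) is continuous at \(0\) and \(Lu_1=f_1:=f_0-c_1L\varphi_1\). By \eqref{Lphi} and Lemma~\ref{lemma:Deri_K}, \((L\varphi_1)''=(L\varphi_0)'=K\) away from \(0\), so \(f_1''=f''-c_0K'-c_1K\) a.e. The goal is to show that \(u_1''\) behaves like \(f_1''/(-\alpha)\) near \(0\) up to a continuous term. For this we would show that \((u_1*K)''=u_1'*K'\) is continuous: differentiate once as above, then write \(u_1'*K\) and differentiate by moving the derivative onto the kernel. This is legitimate because \(u_1'\) is in \(L^2\), and because \(K'\in L^2\) together with continuity of \(K\) at \(0\) (and no problematic endpoint jumps for this local argument) makes \(K\) absolutely continuous on \((-\delta,\delta)\). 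If \(K\) jumps at \(\pm\delta\), then the distributional derivative of \(K\) includes point masses, giving extra terms \(u_1'(x\mp\delta)\times\)(jump). These are continuous near \(x=0\) provided \(\pm\delta\) avoids the discontinuity set of \(u_1'\) relative to \(0\). That holds locally near \(0\), since \(u_1'\) is continuous near \(\mp\delta\) when \(\delta\) is away from the jump at \(0\). So \((u_1*K)''\) is continuous in a neighborhood of \(0\).

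With that in hand, \(\alpha u_1''=(u_1*K)''-f''+c_0K'+c_1K\). Here \(f''\) and \(K\) are bounded, \((u_1*K)''\) is continuous, and \(|K'(0^\pm)|=\infty\) with \(c_0\neq0\). Hence \(|u_1''(0^\pm)|=\infty\), and since \(u=u_1+c_0\varphi_0+c_1\varphi_1\) with \(\varphi_0''=\varphi_1''=0\) off \(0\), we get \(|u''(0^\pm)|=\infty\). Part (2) runs in reverse. Under Assumption (B) on \(u\), the same subtractions \(u_1=u-c_0\varphi_0-c_1\varphi_1\) produce a \(u_1\) that is \(C^1\) near \(0\) with bounded \(u_1''\). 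Then \(f_1=Lu_1\) has bounded second derivative near \(0\), while \(f_1''=f''-c_0K'-c_1K\). This forces \(|f''(0^\pm)|=\infty\).

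The main obstacle is justifying that \((u_1*K)''\) is continuous near \(0\), that is, differentiating the convolution twice. The first differentiation uses Lemma~\ref{lemma:Leibniz} with \(u_1'\in L^2\), but \(u_1''\) is not known to be integrable. So the second derivative must be transferred to the kernel, which requires handling the possible horizon jumps of \(K\) as Dirac contributions. I would first try writing \(u_1'*K(x)=\int u_1'(y)K(x-y)\,dy\) and applying Lemma~\ref{lemma: Leibniz_easy} piecewise on \((x-\delta,x)\) and \((x,x+\delta)\), where \(K\) is smooth.
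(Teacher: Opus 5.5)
Your overall route is the paper's: remove the jumps with $c_0\varphi_0$ and $c_1\varphi_1$, use $f_1''=f''-c_0K'-c_1K$, and show that $(u_1*K)''$ stays finite near $0$, so the singularity of $K'$ must appear in $u_1''$. Two of your steps differ slightly from the paper, and both are fine. First, you get $u_0\in W^{1,2}$ directly from Theorem~\ref{thm:Radu_mod} applied to $f_0$, which is legitimate since $f_0'=f'-c_0K\in L^\infty$; this gives $(u_0*K)'=u_0'*K$ by Lemma~\ref{lemma:Leibniz}. Second, you move the second derivative onto $K$ with Dirac terms at $\pm\delta$, which reproduces the paper's formula \eqref{eq:K_bounded_u1} with $(u_1*K')'=u_1'*K'$.

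The step that fails is your claim that $u_1'$ is continuous near $\pm\delta$, and hence that $(u_1*K)''$ is continuous in a neighborhood of $0$. This is false whenever $K(\delta^-)\neq 0$, which is the case you yourself set out to handle. Your own identity $\alpha u_0'=u_0'*K-f'+c_0K$ shows that $u_0'$ inherits the horizon jumps of $K$: $[u_0']_{\pm\delta}=\mp c_0K(\delta^-)/\alpha\neq 0$. Subtracting $c_1\varphi_1$, which is smooth at $\pm\delta$, does not change this. So the horizon terms $K(-\delta^+)u_1'(x+\delta)-K(\delta^-)u_1'(x-\delta)$ jump at $x=0$ by $-2c_0K(\delta^-)^2/\alpha$. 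This is exactly the cascade mechanism of Theorem~\ref{thm:K_jump}, whose proof shows that $(u_1*K)''$ jumps at $0$.

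The repair is easy, and it is what the paper actually uses: you never need continuity, only $|(u_1*K)''(0^\pm)|<\infty$. The same identity shows that $u_0'$, hence $u_1'$, has finite one-sided limits at $\pm\delta$. This holds when $\pm\delta\in\Omega$; otherwise these terms involve $b'$, a point the paper also leaves implicit. So the horizon terms are continuous for $0<|x|<\eta$ with finite one-sided limits at $x=0$, and $u_1'*K'$ is continuous. Hence $u_1'*K$ is continuously differentiable on each punctured side, so $u_1''$ exists classically there. Then $\alpha u_1''=(u_1*K)''-f''+c_0K'+c_1K$, with $c_0\neq 0$ and $|K'(0^\pm)|=\infty$, gives the blow-up. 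Your Part (2) is fine as sketched, because there Assumption (B) on $u$ really does make $u_1'$ continuous near $\pm\delta$.
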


\begin{proof}
\begin{enumerate}
    \item
    Since \(u\in L^2(\mathbb{R})\) by zero extension and \(K\in L^2(\mathbb{R})\) because \(K\) is bounded and compactly supported, the convolution \(u*K\) is continuous on \(\mathbb{R}\). From the decomposition
    \[
    u*K-\alpha u=f \qquad \text{in } \Omega,
    \]
    it follows that \(u\) is continuous on \(\Omega\setminus\{0\}\), and that \(u\) has a jump discontinuity at \(0\) of magnitude
    \[
    c_0:=[u]_0=-\frac{1}{\alpha}[f]_0,
    \]
    where \(0<|c_0|<\infty\).

    Define
    \begin{equation}\label{eq:u0_1}
        u_0:=u-c_0\varphi_0.
    \end{equation}
    Then \(u_0\in C^0(\Omega)\) and solves
    \begin{align}\label{eq:f0_1}
        \begin{cases}
            Lu_0=f_0:=f-c_0L\varphi_0 & \text{in } \Omega,\\
            u_0=b-c_0\varphi_0 & \text{in } \Omega_I.
        \end{cases}
    \end{align}

    Since \(K\) is continuous on \([-\delta,0)\cup(0,\delta]\), it extends continuously to \([-\delta,0]\) and \([0,\delta]\). Writing
    \[
    (u_0*K)(x)=\int_{x-\delta}^{x+\delta}u_0(y)K(x-y)\,dy
    =\int_{x-\delta}^{x+\delta}g(x,y)\,dy,
    \]
    and applying Lemma~\ref{lemma: Leibniz_easy} to each subinterval, we obtain
    \begin{equation}\label{eq:K_bounded_1}
    \begin{split}
        \frac{d}{dx}\int_{x-\delta}^{x}g(x,y)\,dy
        &=g(x,x)-g(x,x-\delta)+\int_{x-\delta}^{x}\frac{\partial g}{\partial x}(x,y)\,dy\\
        &=u_0(x)K(0^+)-u_0(x-\delta)K(\delta^-)+\int_{x-\delta}^{x}u_0(y)K'(x-y)\,dy,
    \end{split}
    \end{equation}
    and similarly,
    \begin{equation}\label{eq:K_bounded_2}
    \begin{split}
        \frac{d}{dx}\int_{x}^{x+\delta}g(x,y)\,dy
        &=g(x,x+\delta)-g(x,x)+\int_{x}^{x+\delta}\frac{\partial g}{\partial x}(x,y)\,dy\\
        &=u_0(x+\delta)K(-\delta^+)-u_0(x)K(0^-)+\int_{x}^{x+\delta}u_0(y)K'(x-y)\,dy.
    \end{split}
    \end{equation}
    Adding \eqref{eq:K_bounded_1} and \eqref{eq:K_bounded_2}, we find
    \begin{equation}\label{eq:K_bounded_u0}
    \begin{split}
        \frac{d}{dx}(u_0*K)(x)
        &=K(-\delta^+)u_0(x+\delta)-K(\delta^-)u_0(x-\delta)+(K(0^+)-K(0^-))u_0(x)\\
        &\quad +(u_0*K')(x).
    \end{split}
    \end{equation}
    Since \(u_0\in L^2(\mathbb{R})\) and \(K'\in L^2(\mathbb{R})\), the convolution \(u_0*K'\) is continuous on \(\mathbb{R}\). Therefore, \((u_0*K)'\) is continuous on \(\Omega\).

    From \eqref{eq:f0_1} and Lemma~\ref{lemma:Deri_K}, we have
    \[
    f_0'=f'-c_0K
    \]
    almost everywhere. Since \(f'\) is bounded on each side of the origin and \(K\in L^\infty(\mathbb{R})\), it follows that \(0\le |[f_0']_0|<\infty\). Let
    \[
    c_1:=[u_0']_0=-\frac{1}{\alpha}[f_0']_0,
    \]
    so that \(0\le |c_1|<\infty\), and define
    \begin{equation}\label{eq:u1_1}
        u_1:=u_0-c_1\varphi_1.
    \end{equation}
    Then \(u_1\in C^0(\Omega)\), \(u_1\) is continuously differentiable on \(\Omega\setminus\{\pm\delta\}\), and \(u_1\) solves
    \begin{align}\label{eq:u1_solve_thm_unbounded}
        \begin{cases}
            Lu_1=f_1:=f_0-c_1L\varphi_1 & \text{in } \Omega,\\
            u_1=b-c_0\varphi_0-c_1\varphi_1 & \text{in } \Omega_I.
        \end{cases}
    \end{align}

    Applying Lemma~\ref{lemma: Leibniz_easy} once more and proceeding as above, we obtain
    \begin{equation}\label{eq:K_bounded_u1}
    \begin{split}
        \frac{d^2}{dx^2}(u_1*K)(x)
        &=K(-\delta^+)u_1'(x+\delta)-K(\delta^-)u_1'(x-\delta)+(K(0^+)-K(0^-))u_1'(x)\\
        &\quad +\frac{d}{dx}(u_1*K')(x).
    \end{split}
    \end{equation}

    We now show that \(u_1\in W^{1,2}(\Omega\cup\Omega_I)\). Since
    \[
    f_1=f-c_0L\varphi_0-c_1L\varphi_1,
    \]
    and \(f\in L^\infty(\Omega)\) by Assumption (B), while \(L\varphi_0,L\varphi_1\in L^\infty(\Omega)\) by Lemma~\ref{lemma:Deri_K}, it follows that \(f_1\in L^\infty(\Omega)\). Also,
    \[
    f_1'=f'-c_0K-c_1L\varphi_0
    \]
    almost everywhere, so \(f_1'\in L^2(\Omega)\). Hence
    \[
    f_1\in W^{1,2}(\Omega)\cap L^\infty(\Omega).
    \]
    By Theorem~\ref{thm:Radu_mod}, this implies \(u_1\in W^{1,2}(\Omega)\). Since \(b\in W^{1,2}(\Omega_I)\), it follows that
    \[
    u_1\in W^{1,2}(\Omega\cup\Omega_I).
    \]
    Therefore \(u_1'\in L^2(\mathbb{R})\).

    By Lemma~\ref{lemma:Leibniz},
    \[
    \frac{d}{dx}(u_1*K')(x)=u_1'*K'(x)
    \]
    almost everywhere. Since \(u_1'\in L^2(\mathbb{R})\) and \(K'\in L^2(\mathbb{R})\), the convolution \(u_1'*K'\) is continuous on \(\mathbb{R}\). Hence \eqref{eq:K_bounded_u1} implies
    \[
    |(u_1*K)''(0^\pm)|<\infty.
    \]

    On the other hand, by Lemma~\ref{lemma:Deri_K},
    \[
    f_1''=f''-c_0K'-c_1K
    \]
    almost everywhere. Since \(K'\) is unbounded at \(0\) and \(0<|c_0|<\infty\), it follows that
    \[
    |f_1''(0^\pm)|=\infty.
    \]
    Using the decomposition
    \[
    u_1*K-\alpha u_1=f_1,
    \]
    we conclude that
    \[
    |u_1''(0^\pm)|=\infty.
    \]
    Finally, by \eqref{eq:u0_1} and \eqref{eq:u1_1},
    \[
    |u''(0^\pm)|=\infty.
    \]

    \item
    The argument is analogous to that of Part (1). Since \(u*K\) is continuous on \(\mathbb{R}\), the decomposition
    \[
    u*K-\alpha u=f \qquad \text{in } \Omega
    \]
    shows that \(f\) has a jump discontinuity at \(0\) of magnitude
    \[
    [f]_0=-\alpha [u]_0,
    \]
    and is continuous on \(\Omega\setminus\{0\}\).

Let
\begin{align*}
    c_0:=[u]_0=-\frac{1}{\alpha}[f]_0,
\end{align*}
and define $u_0:=u-c_0\varphi_0$. Since $u$ satisfies Assumption (B), it is $C^N$ on $(a_1,0)$ and on $(0,a_2)$, with a single jump discontinuity at $0$. By construction the jump at $0$ is removed. Hence $u_0$ is continuous on $\Omega$ and remains $C^N$ on each of the intervals $(a_1,0)$ and $(0,a_2)$. In particular, $u_0$ is absolutely continuous on every compact subinterval of $\Omega$, and its weak derivative agrees almost everywhere with the classical derivative on $(a_1,0)\cup(0,a_2)$. Since $u_0$ and $u_0'$ are both square-integrable on $\Omega\cup\Omega_I$, it follows that
\[
u_0\in W^{1,2}(\Omega\cup\Omega_I).
\]
Lemma \ref{lemma:Leibniz} then implies $(u_0\ast K)^\prime=u_0^\prime\ast K$ almost everywhere.
    As in \eqref{eq:K_bounded_u0}, we have
    \begin{equation}\label{eq:K_bounded_u0_2}
    \begin{split}
        \frac{d}{dx}(u_0*K)(x)
        &=K(-\delta^+)u_0(x+\delta)-K(\delta^-)u_0(x-\delta)+(K(0^+)-K(0^-))u_0(x)\\
        &\quad +(u_0*K')(x).
    \end{split}
    \end{equation}
    Since \(u_0\in L^2(\mathbb{R})\) and \(K'\in L^2(\mathbb{R})\), the convolution \(u_0*K'\) is continuous on \(\mathbb{R}\). Hence \((u_0*K)'\) is continuous on \(\Omega\).

    Define \(f_0:=f-c_0L\varphi_0\). From
    \[
    u_0*K-\alpha u_0=f_0,
    \]
    it follows that \(f_0'\) has a jump discontinuity at \(0\) of magnitude
    \[
    [f_0']_0=-\alpha [u_0']_0.
    \]
    Let
    \[
    c_1:=[u_0']_0=-\frac{1}{\alpha}[f_0']_0,
    \]
    and define
    \[
    u_1:=u_0-c_1\varphi_1.
    \]
    Since \(0<|c_0|<\infty\), we may again differentiate once more to obtain, as in \eqref{eq:K_bounded_u1},
    \begin{equation}\label{eq:K_bounded_u1_2}
    \begin{split}
        \frac{d^2}{dx^2}(u_1*K)(x)
        &=K(-\delta^+)u_1'(x+\delta)-K(\delta^-)u_1'(x-\delta)+(K(0^+)-K(0^-))u_1'(x)\\
        &\quad +\frac{d}{dx}(u_1*K')(x).
    \end{split}
    \end{equation}
    By Lemma~\ref{lemma:Leibniz},
    \[
    (u_1*K')'=u_1'*K'
    \]
    almost everywhere. Since \(u_1'\in L^2(\mathbb{R})\) and \(K'\in L^2(\mathbb{R})\), the convolution \(u_1'*K'\) is continuous on \(\mathbb{R}\). Therefore, from \eqref{eq:K_bounded_u1_2},
    \[
    |(u_1*K)''(0^\pm)|<\infty.
    \]

    Defining \(f_1:=f_0-c_1L\varphi_1\), the decomposition
    \[
    u_1*K-\alpha u_1=f_1
    \]
    implies that
    \[
    |f_1''(0^\pm)|<\infty.
    \]
    By Lemma~\ref{lemma:Deri_K},
    \[
    f_1''=f''-c_0K'-c_1K
    \]
    almost everywhere. Since \(K'\) is unbounded at \(0\) and \(0<|c_0|<\infty\), it follows that
    \[
    |f''(0^\pm)|=\infty.
    \]
\end{enumerate}
\end{proof}

Theorem~\ref{thm:K_unbounded1}
 is illustrated by the  kernel $P$ given in \eqref{example_kernel_2}, which  is bounded, continuous, and satisfies
\(
|P'(0^\pm)|=\infty.
\)
A direct computation gives
\begin{align*}
    L_P\varphi_0(x)=
    \begin{cases}
        c_P(x-\delta)-\dfrac{3(3+\beta)}{\beta(1+\beta)\delta^{3+\beta}}\bigl(x^{1+\beta}-\delta^{1+\beta}\bigr), & 0\le x<\delta,\\[1ex]
        c_P(x+\delta)-\dfrac{3(3+\beta)}{\beta(1+\beta)\delta^{3+\beta}}\bigl(\delta^{1+\beta}-(-x)^{1+\beta}\bigr), & -\delta<x<0,\\[1ex]
        0, & \text{otherwise}.
    \end{cases}
\end{align*}
It follows that
\[
|(L_P\varphi_0)''(0^\pm)|=\infty.
\]

\begin{figure}[h!]
\centering
\includegraphics[scale=0.5]{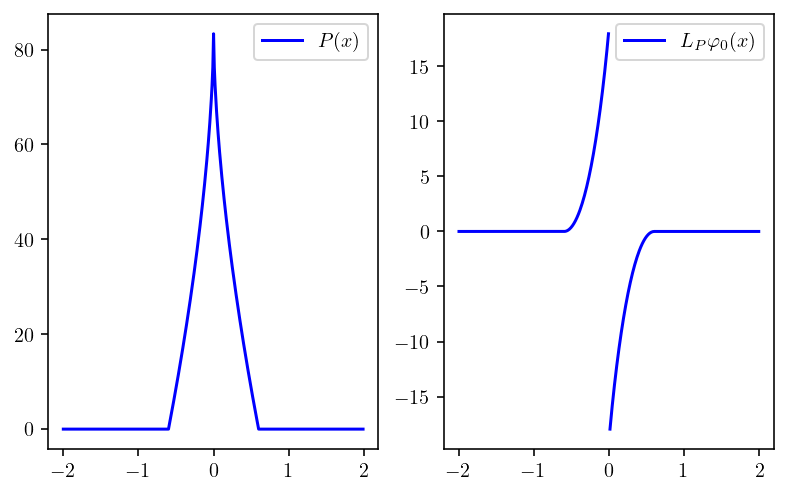}
\caption{Graphs of the kernel \(P(x)\) (left) and \(L_P\varphi_0(x)\) (right) for \(\delta=0.6\) and \(\beta=0.6\).}
\label{fig:kernel_2}
\end{figure}

Theorems~\ref{thm:K_unbounded} and~\ref{thm:K_unbounded1} admit the following generalization.

\begin{theorem}\label{thm:unbounded_gen}
Assume that the kernel \(K\) satisfies Assumptions (A1)--(A2), and, in addition, that
\[
K, K', \ldots, K^{(j-1)} \in L^\infty(\mathbb{R}),
\]
that \(K^{(j)}\) is singular at the origin, and that \(K^{(j)} \in L^2(\mathbb{R})\).
\begin{enumerate}
    \item Suppose that \(f\) satisfies Assumption (B) and \(b\in W^{1,2}(\Omega_I)\). Then, for the solution \(u\in L^2(\Omega\cup\Omega_I)\) of \eqref{eq:Poisson}, one has
    \[
    |u^{(j+1)}(0^\pm)|=\infty.
    \]
    \item Suppose that \(u\) satisfies Assumption (B). Then, for \(f:=Lu\), one has
    \[
    |f^{(j+1)}(0^\pm)|=\infty.
    \]
\end{enumerate}
\end{theorem}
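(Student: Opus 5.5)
The argument is an induction on $j$ that repeats the proofs of Theorems~\ref{thm:K_unbounded} and~\ref{thm:K_unbounded1}: we peel off jumps one derivative at a time. The cases $j=0$ and $j=1$ are exactly those two theorems. For general $j$, we build successive corrections
\[
u_0:=u-c_0\varphi_0,\qquad u_k:=u_{k-1}-c_k\varphi_k\quad(k=1,\dots,j),
\]
where $c_k:=[u_{k-1}^{(k)}]_0=-\frac{1}{\alpha}[f_{k-1}^{(k)}]_0$ and $f_k:=f_{k-1}-c_kL\varphi_k$. Each $u_k$ then solves $Lu_k=f_k$ in $\Omega$, with constraint $b-\sum_{i\le k}c_i\varphi_i$ in $\Omega_I$. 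Using \eqref{Lphi} and Lemma~\ref{lemma:Deri_K}, we have $(L\varphi_i)^{(m)}=L\varphi_{i-m}$ for $m\le i$ and $(L\varphi_0)'=K$. Hence, almost everywhere,
\[
f_k^{(k+1)}=f^{(k+1)}-c_0K^{(k)}-c_1K^{(k-1)}-\cdots-c_kK .
\]
Because $K,\dots,K^{(j-1)}\in L^\infty$ and $f^{(m)}$ is bounded for $m\le N$ (we assume $N\ge j+1$), the jumps $[f_{k-1}^{(k)}]_0$ are finite for $k\le j$. So every $c_k$ is finite, and $c_0\neq0$.

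The main step is to show that $(u_j*K)^{(j+1)}$ has finite one-sided limits at $0$. We repeatedly differentiate the convolution with the classical Leibniz rule (Lemma~\ref{lemma: Leibniz_easy}), splitting the integral at $x$ and $x\pm\delta$ as in \eqref{eq:K_bounded_u0}. For $m\le j-1$ this gives
\[
\frac{d}{dx}(v*K^{(m)})=K^{(m)}(-\delta^+)v(x+\delta)-K^{(m)}(\delta^-)v(x-\delta)+[K^{(m)}]_0\,v(x)+v*K^{(m+1)}.
\]
Iterating this identity expresses $(u_j*K)^{(j+1)}$ as two kinds of terms. The first kind are boundary terms: derivatives $u_j^{(i)}$, $i\le j$, evaluated at $x$ and $x\pm\delta$ and multiplied by constants. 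The second kind is the single term $\frac{d}{dx}(u_j*K^{(j)})$.

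Boundary terms evaluated at $x$ are continuous near $0$, since $u_j\in C^j$ near $0$ by construction. Boundary terms at $x\pm\delta$ are evaluated away from the jump point, so they have finite one-sided limits at $0$. For the remaining term, we show $u_j\in W^{1,2}(\Omega\cup\Omega_I)$ exactly as in Theorem~\ref{thm:K_unbounded1}. First, $f_j\in L^\infty$ by Lemma~\ref{lemma:Deri_K}. Second, $f_j'=f'-c_0K-c_1L\varphi_0-\cdots-c_jL\varphi_{j-2}\in L^2$. Theorem~\ref{thm:Radu_mod} together with $b\in W^{1,2}(\Omega_I)$ then gives $u_j\in W^{1,2}$. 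Lemma~\ref{lemma:Leibniz} gives $(u_j*K^{(j)})'=u_j'*K^{(j)}$, and this is continuous because both factors lie in $L^2$.

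With this in hand, the decomposition $u_j*K-\alpha u_j=f_j$ finishes the argument. The formula for $f_j^{(j+1)}$ contains the term $-c_0K^{(j)}$ with $c_0\ne0$. This term is unbounded at $0^\pm$, while all other terms are bounded. Therefore $|f_j^{(j+1)}(0^\pm)|=\infty$, which forces $|u_j^{(j+1)}(0^\pm)|=\infty$. Since $u-u_j=\sum c_k\varphi_k$ is a polynomial on each side of $0$, Part~1 follows.

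Part~2 uses the same bookkeeping in reverse. Now $u$ satisfies (B), so $u_j$ is piecewise $C^N$, continuous with its first $j$ derivatives, and lies in $W^{1,2}$ directly. The same expansion shows $(u_j*K)^{(j+1)}(0^\pm)$ is finite, so $f_j^{(j+1)}(0^\pm)$ is finite. Since $-c_0K^{(j)}$ blows up, $f^{(j+1)}(0^\pm)$ must blow up as well.

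The expected obstacle is justifying the iterated Leibniz differentiation. The intermediate functions $v*K^{(m)}$ must be differentiated with $v=u_j^{(i)}$ possibly only piecewise smooth: $u_j$ may lose regularity at $\pm\delta$, and at translated points, because of the horizon jumps. I would handle this by differentiating only on a small neighbourhood of $0$, where the points $x\pm\delta$ stay away from $0$, and by working with one-sided limits.
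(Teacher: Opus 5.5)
Your overall strategy matches the paper's proof, which simply iterates the proof of Theorem~\ref{thm:K_unbounded1}: subtract $c_k\varphi_k$, expand $(u_j*K)^{(j+1)}$ with the classical Leibniz rule while $K^{(m)}$ is bounded, use $u_j\in W^{1,2}$ and Lemma~\ref{lemma:Leibniz} for the last derivative, and read off blow-up from $-c_0K^{(j)}$. However, one step is wrong as written. You get finiteness of $c_k$ from the identity $c_k=[u_{k-1}^{(k)}]_0=-\frac{1}{\alpha}[f_{k-1}^{(k)}]_0$, which presumes that $(u_{k-1}*K)^{(k)}$ is continuous at $0$. For $k\ge 2$ this fails whenever $K$ jumps at $\pm\delta$. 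In that case $u'$ jumps at $\pm\delta$, so the translated terms $K(-\delta^+)u_{k-1}^{(k-1)}(x+\delta)-K(\delta^-)u_{k-1}^{(k-1)}(x-\delta)$ jump at $x=0$. This is exactly how the proof of Theorem~\ref{thm:K_jump} finds a jump of $(u_1*K)''$ at $0$. An admissible example with $j=2$ is $K(r)=c\,(2\delta^{1+\beta}-|r|^{1+\beta})\chi_{(-\delta,\delta)}(r)$ with $\tfrac12<\beta<1$. There, assuming the relevant translates lie in $\Omega$, one gets $[(u_1*K)'']_0=-\tfrac{2c_0}{\alpha}K(\delta^-)^2\neq 0$. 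The theorem's conclusion survives, because only $c_0\neq 0$ and the finiteness of the other $c_k$ are used. But finiteness of $c_k$ must come from running your main step at every level $k\le j$, not only at level $j+1$. The same is true of the existence of $u_{k-1}^{(k)}(0^\pm)$, which your phrase ``$u_j\in C^j$ near $0$ by construction'' takes for granted.

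The main step has a matching hole. The translated terms $u_j^{(i)}(x\pm\delta)$, $i\le j$, do not have one-sided limits at $x=0$ merely because they are ``away from the jump point.'' In Part~1 nothing is assumed about $u$ near $\pm\delta$, and $u$ genuinely has derivative jumps there. You need to derive this regularity from $\alpha u_j^{(i)}=(u_j*K)^{(i)}-f_j^{(i)}$ near $\pm\delta$. On the right-hand side, $f_j^{(i)}$ involves $K^{(i-1)},\dots,K$, which are bounded and have one-sided limits for $i\le j$. The term $(u_j*K)^{(i)}$ involves only $u_j^{(i')}$ with $i'\le i-1$ at the neighbouring translates ($0$, $\pm\delta$, $\pm2\delta$), plus a continuous convolution term. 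Since the derivative order drops at each translation, a finite induction on $i$ over the points $\pm m\delta$ closes the argument. This is the general-$j$ analogue of how the $j=1$ proof obtains the behaviour of $u_0'$ at $\pm\delta$ from $f_0'=f'-c_0K$. Like the paper, this tacitly needs the translates to lie in $\Omega$, since $b\in W^{1,2}(\Omega_I)$ gives no higher derivatives. Part~2 is unaffected, because there $u$ is assumed piecewise $C^N$ and the $c_k$ are defined directly from $u$.

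Two minor points. The formula for $f_j'$ should end with $c_jL\varphi_{j-1}$, not $c_jL\varphi_{j-2}$. Also, what you carry out is an iteration in $k$ for fixed $j$, not an induction on $j$; the hypotheses for different values of $j$ are mutually exclusive.
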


The proof of Theorem~\ref{thm:unbounded_gen} follows by iterating the argument used in Theorem~\ref{thm:K_unbounded1}. At each step, one subtracts the appropriate jump component \(c_m\varphi_m\), differentiates the convolution representation, and uses the singularity of \(K^{(j)}\) at the origin to conclude blow-up of the next derivative.

These results show that the regularity of the kernel at the origin determines the order of derivative blow-up in the solution. In particular, the stronger the singularity of the kernel or its derivatives at the origin, the higher the order at which singular behavior appears in the solution. Thus, for discontinuous source, singular kernels may introduce stronger local irregularities than those present in the data itself, which is an important consideration both for interpretation of the model and for numerical approximation.

\subsection{Jump discontinuities of the kernel at \texorpdfstring{$\pm\delta$}{\(\pm\delta\)}}\label{subsection:jumpdisct}

We next consider the effect of jump discontinuities of the kernel at the horizon endpoints \(\pm\delta\). In contrast to the singular behavior at the origin studied in the previous subsection, these jumps generate a cascade of discontinuities in higher derivatives of the solution at translated locations.

\begin{theorem}\label{thm:K_jump}
Assume that the kernel \(K\) satisfies Assumptions (A1)--(A2), and, in addition, that
\[
K, K', \ldots, K^{(j)}, \ldots \in L^\infty(\mathbb{R}) \quad \text{for all } j,
\]
and that \(K\) has jump discontinuities at \(\pm \delta\).
\begin{enumerate}
    \item Suppose that \(f\) satisfies Assumption (B) and \(b\in W^{1,2}(\Omega_I)\). Then, for the solution \(u\in L^2(\Omega\cup\Omega_I)\) of \eqref{eq:Poisson}, and for every integer \(k\ge 1\) such that the points \(\pm k\delta\) belong to \(\Omega\), one has
    \[
    0<|[u^{(k)}]_{\pm k\delta}|<\infty.
    \]
    
    \item Suppose that \(u\) satisfies Assumption (B). Then, for \(f:=Lu\),
    \[
    0<|[f']_{\pm\delta}|<\infty,
    \]
    and
    \[
    f\in C^\infty(\Omega\setminus\{0,\pm\delta\}).
    \]
\end{enumerate}
\end{theorem}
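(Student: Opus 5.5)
The plan is to use the differentiated convolution identity \eqref{eq:K_bounded_u0} from the proof of Theorem~\ref{thm:K_unbounded1}, now with $K$ smooth near the origin. This means $K(0^+)=K(0^-)$ and all $K^{(m)}$ are bounded. Write $\kappa:=K(\delta^-)=K(-\delta^+)\neq 0$; by radiality these agree and equal the jump magnitude at $\pm\delta$. For any $v$ with enough regularity, Lemma~\ref{lemma: Leibniz_easy} applied on $(x-\delta,x)$ and $(x,x+\delta)$ gives
\[
(v*K)'(x)=\kappa\bigl(v(x+\delta)-v(x-\delta)\bigr)+(v*K')(x).
\]
The term $v*K'$ has one more derivative than $v$ in the $L^2$ sense. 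Iterating this identity, the main singular contribution to $(v*K)^{(k)}$ comes from translates $v^{(k-1)}(\cdot\pm\delta)$. The remaining terms involve $v^{(k-1-m)}(\cdot\pm\delta)$ multiplied by $K^{(m)}(\delta^-)$, together with a convolution $v*K^{(k)}$, which is continuous because $K^{(k)}\in L^\infty$ is compactly supported.

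\textbf{Part (2).} Here $u$ is given. First, $[f]_0=-\alpha[u]_0$ by Lemma~\ref{lemma:Cont}. Next, differentiate $f=u*K-\alpha u$ away from $0$. At $x=\pm\delta$, $u$ itself is smooth, but $u(x\mp\delta)$ jumps by $c_0=[u]_0\neq 0$. Hence
\[
[f']_{\pm\delta}=[(u*K)']_{\pm\delta}=\mp\kappa c_0,
\]
which is nonzero and finite. The term $u*K'$ is continuous there, since $u\in L^2$ and $K'\in L^2$. For smoothness, take $x\notin\{0,\pm\delta\}$. Write $u*K(x)=\int u(y)K(x-y)\,dy$ and split the integral at $y=0$ and at $y=x\pm\delta$. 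On each piece the integrand is smooth in $x$, and the endpoints are either fixed or move smoothly. Repeated use of Lemma~\ref{lemma: Leibniz_easy} gives $u*K\in C^\infty$ near $x$. A subtlety is that Assumption (B) only gives $u\in C^N$, so "$C^\infty$" should be read as $C^N$, or as $C^\infty$ when $u$ is smooth away from $0$. A second subtlety concerns the $\alpha u$ term: $f\in C^\infty$ away from $\{0,\pm\delta\}$ needs $u$ smooth there. The boundary values $u(x\pm\delta)$ stay smooth because $x\pm\delta\neq 0$.

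\textbf{Part (1).} Here $f$ is given and we argue by induction on $k$, successively subtracting jump components as in Theorem~\ref{thm:K_unbounded1}.

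\emph{Base case $k=1$.} Set $u_0=u-c_0\varphi_0$ as before. Theorem~\ref{thm:Radu_mod} gives $u_0\in W^{1,2}$, and $f'$ is continuous at $\pm\delta$ by Assumption (B). Apply the identity to $v=u$ in $\alpha u=u*K-f$ at $x=\delta$. The term $u(x-\delta)$ jumps by $c_0$, while $u*K'$ and $f'$ are continuous there. Hence $\alpha[u']_\delta=-\kappa c_0\neq 0$, and symmetrically at $-\delta$.

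\emph{Inductive step.} Assume $u^{(m)}$ is continuous at $\pm k\delta$ for $m<k$, and $0<|[u^{(k)}]_{\pm k\delta}|<\infty$. Differentiate $\alpha u=u*K-f$ a total of $k+1$ times near $x=(k+1)\delta$, where $f$ is smooth. The top-order term $\kappa u^{(k)}(x-\delta)$ carries the nonzero jump. This gives
\[
\alpha[u^{(k+1)}]_{(k+1)\delta}=-\kappa[u^{(k)}]_{k\delta}+(\text{jumps of lower-order translated terms}).
\]
The lower-order terms are $K^{(m)}(\delta^-)\,u^{(k-m)}(x\mp\delta)$ with $m\ge 1$, and these are continuous at $(k+1)\delta$ by the induction hypothesis. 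The translate $u^{(\cdot)}(x+\delta)$ sits at $(k+2)\delta$, where the relevant lower derivatives are likewise continuous. The convolution term is continuous. Hence the jump is $-(\kappa/\alpha)[u^{(k)}]_{k\delta}$, which is nonzero and finite, and by induction $[u^{(k)}]_{k\delta}=(-\kappa/\alpha)^k c_0$ up to sign conventions.

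\textbf{Main obstacle.} The hardest step is justifying enough regularity of $u$ to differentiate $k$ times near $k\delta$. The plan is to localize: on $(k\delta-\delta/2,\,k\delta+\delta/2)$, the equation $\alpha u=u*K-f$ expresses $u$ through translates of $u$ from regions already controlled by the induction. We then bootstrap piecewise $C^k$ regularity, handling interval by interval the pieces between consecutive points of $\delta\mathbb{Z}$, using also $b\in W^{1,2}$ near the collar. We also need that $u^{(m)}$ for $m<k$ is continuous at $k\delta$. This follows because the earlier jumps occur only at the points $j\delta$ with $j<k$, in derivatives of order $j$.
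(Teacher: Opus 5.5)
Your proof is correct and uses the same mechanism as the paper's. The Leibniz formula for $(u*K)'$ has horizon terms $K(-\delta^+)u(x+\delta)-K(\delta^-)u(x-\delta)$; after $k-1$ further differentiations these carry a jump of $u^{(k-1)}$ at $\pm(k-1)\delta$ to a jump of $u^{(k)}$ at $\pm k\delta$, while $f^{(k)}$ and the convolution remainder stay continuous there. The paper adds bookkeeping by peeling off $c_m\varphi_m$ and reading the first jumps from $f_0'=f'-c_0K$. You instead differentiate $\alpha u=u*K-f$ directly, which also gives the explicit values $[u^{(k)}]_{\pm k\delta}=(\mp\kappa/\alpha)^k c_0$.

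When writing it up, fix three things.

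\begin{enumerate}
\item State the induction hypothesis globally: each $u^{(m)}$, $m\le k$, is continuous on $\Omega\setminus\{0,\pm\delta,\dots,\pm m\delta\}$. Your step needs continuity at $\pm(k+2)\delta$, which your current hypothesis does not cover.
\item Keep the origin terms $\bigl(K^{(m)}(0^+)-K^{(m)}(0^-)\bigr)u^{(k-m)}(x)$. The hypotheses do not make $K$ smooth at $0$, and odd derivatives of a radial kernel can jump there. These terms are harmless, since they are continuous at $\pm(k+1)\delta$.
\item As in the paper, the bootstrap tacitly needs $k\le N$, and more regularity than $b\in W^{1,2}(\Omega_I)$ once the translates $x\pm j\delta$ reach $\Omega_I$.
\end{enumerate}
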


The jump propagation described below is understood only at translated points that remain inside the spatial domain \(\Omega\). Thus, whenever we refer to jumps at \(\pm k\delta\), we implicitly assume that these points belong to \(\Omega\).
\begin{proof}
\begin{enumerate}
    \item
    Since \(u\in L^2(\mathbb{R})\) by zero extension and \(K\in L^2(\mathbb{R})\) because \(K\) is bounded and compactly supported, the convolution \(u*K\) is continuous on \(\mathbb{R}\). From
    \[
    u*K-\alpha u=f \qquad \text{in } \Omega,
    \]
    it follows that \(u\) has a jump discontinuity at \(0\) of magnitude
    \[
    c_0:=[u]_0=-\frac{1}{\alpha}[f]_0,
    \]
    where \(0<|c_0|<\infty\). Moreover, \(u\) is continuous on \(\Omega\setminus\{0\}\).

    Define
    \begin{equation}\label{eq:K_jump_u0_def}
        u_0:=u-c_0\varphi_0.
    \end{equation}
    Then \(u_0\in C^0(\Omega)\) and \(u_0\) solves
    \[
    \begin{cases}
        Lu_0=f_0:=f-c_0L\varphi_0 & \text{in } \Omega,\\
        u_0=b-c_0\varphi_0 & \text{in } \Omega_I.
    \end{cases}
    \]

    As in \eqref{eq:K_bounded_u0}, applying Lemma~\ref{lemma: Leibniz_easy} yields
    \begin{align}\label{eq:K_jump_u0}
    \begin{split}
        \frac{d}{dx}(u_0*K)(x)
        &=K(-\delta^+)u_0(x+\delta)-K(\delta^-)u_0(x-\delta)+(K(0^+)-K(0^-))u_0(x)\\
        &\quad +(u_0*K')(x).
    \end{split}
    \end{align}
    Since \(u_0\in L^2(\mathbb{R})\) and \(K'\in L^2(\mathbb{R})\), the convolution \(u_0*K'\) is continuous on \(\mathbb{R}\). Hence \((u_0*K)'\) is continuous on \(\Omega\).

    By Lemma~\ref{lemma:Deri_K},
    \[
    f_0'=f'-c_0K
    \]
    almost everywhere. Because \(K\) has jump discontinuities at \(\pm\delta\), it follows that \(f_0'\) has jump discontinuities at \(0\) and \(\pm\delta\). Since \((u_0*K)'\) is continuous, the decomposition
    \[
    u_0*K-\alpha u_0=f_0
    \]
    implies that \(u_0'\) has jump discontinuities at the same points. In particular, using \eqref{eq:K_jump_u0_def}, we obtain
    \[
    0<|[u']_{\pm\delta}|<\infty.
    \]

    Next, let
    \[
    c_1:=[u_0']_0=-\frac{1}{\alpha}[f_0']_0,
    \]
    where \(0\le |c_1|<\infty\), and define
    \begin{equation}\label{eq:K_jump_u1_def}
        u_1:=u_0-c_1\varphi_1.
    \end{equation}
    Then \(u_1\in C^0(\Omega)\), \(u_1\) is continuously differentiable on \(\Omega\setminus\{\pm\delta\}\), and \(u_1\) solves
    \[
    \begin{cases}
        Lu_1=f_1:=f_0-c_1L\varphi_1 & \text{in } \Omega,\\
        u_1=b-c_0\varphi_0-c_1\varphi_1 & \text{in } \Omega_I.
    \end{cases}
    \]

    Applying Lemma~\ref{lemma: Leibniz_easy} once more, as in \eqref{eq:K_bounded_u1}, we obtain
    \begin{align}\label{eq:K_jump_u1}
    \begin{split}
        \frac{d^2}{dx^2}(u_1*K)(x)
        &=K(-\delta^+)u_1'(x+\delta)-K(\delta^-)u_1'(x-\delta)+(K(0^+)-K(0^-))u_1'(x)\\
        &\quad +K'(-\delta^+)u_1(x+\delta)-K'(\delta^-)u_1(x-\delta)+(K'(0^+)-K'(0^-))u_1(x)\\
        &\quad +(u_1*K'')(x).
    \end{split}
    \end{align}
    Since \(u_1\in L^2(\mathbb{R})\) and \(K''\in L^2(\mathbb{R})\), the convolution \(u_1*K''\) is continuous on \(\mathbb{R}\). Moreover, by assumption, \(K(0^+)=K(0^-)\), while \(K(-\delta^+)\) and \(K(\delta^-)\) are nonzero. Since \(u_1\in C^0(\Omega)\), \(u_1'\in C^0(\Omega\setminus\{\pm\delta\})\), and \(u_1'\) has jump discontinuities at \(\pm\delta\), it follows from \eqref{eq:K_jump_u1} that \((u_1*K)''\) has jump discontinuities at \(0\) and \(\pm2\delta\).

    On the other hand, Lemma~\ref{lemma:Deri_K} gives
    \[
    f_1''=f''-c_0K'-c_1K
    \]
    almost everywhere, so \(f_1''\) has jump discontinuities at \(0\) and \(\pm\delta\). Therefore, from the decomposition
    \[
    u_1*K-\alpha u_1=f_1,
    \]
    we conclude that \(u_1''\) has jump discontinuities at \(0\), \(\pm\delta\), and \(\pm2\delta\). Using \eqref{eq:K_jump_u0_def} and \eqref{eq:K_jump_u1_def}, we obtain
    \[
    0<|[u'']_{\pm2\delta}|<\infty.
    \]

    Repeating the same argument inductively shows that each further differentiation produces jump discontinuities at the next translated locations \(\pm k\delta\), which proves the claim.

    \item
    The argument begins as in Part (1). Since \(u*K\) is continuous on \(\mathbb{R}\), the decomposition
    \[
    u*K-\alpha u=f \qquad \text{in } \Omega
    \]
    shows that \(f\) has a jump discontinuity at \(0\) of magnitude
    \[
    [f]_0=-\alpha [u]_0,
    \]
    and that \(f\) is continuous on \(\Omega\setminus\{0\}\).

    Let
    \[
    c_0:=[u]_0=-\frac{1}{\alpha}[f]_0,
    \]
    where \(0<|c_0|<\infty\), and define \(u_0:=u-c_0\varphi_0\). Then \(u_0\in C^0(\Omega)\). As in \eqref{eq:K_jump_u0}, we have
    \begin{align}\label{eq:K_jump_u0_1}
    \begin{split}
        \frac{d}{dx}(u_0*K)(x)
        &=K(-\delta^+)u_0(x+\delta)-K(\delta^-)u_0(x-\delta)+(K(0^+)-K(0^-))u_0(x)\\
        &\quad +(u_0*K')(x).
    \end{split}
    \end{align}
    Since \(u_0\in L^2(\mathbb{R})\) and \(K'\in L^2(\mathbb{R})\), the convolution \(u_0*K'\) is continuous on \(\mathbb{R}\). Hence \((u_0*K)'\) is continuous in \(\Omega\).

    Let \(f_0:=f-c_0L\varphi_0\). From the decomposition
    \[
    u_0*K-\alpha u_0=f_0,
    \]
    it follows that \(f_0'\) has a jump discontinuity at \(0\) of magnitude
    \[
    [f_0']_0=-\alpha [u_0']_0,
    \]
    and \(f_0'\in C^0(\Omega\setminus\{0\})\). By Lemma~\ref{lemma:Deri_K},
    \[
    f_0'=f'-c_0K
    \]
    almost everywhere. Since \(K\) has jumps only at \(\pm\delta\), we conclude that
    \[
    f'\in C^0(\Omega\setminus\{0,\pm\delta\})
    \qquad \text{and} \qquad
    0<|[f']_{\pm\delta}|<\infty.
    \]

    Next, let
    \[
    c_1:=[u_0']_0=-\frac{1}{\alpha}[f_0']_0,
    \]
    where \(0\le |c_1|<\infty\), and define \(u_1:=u_0-c_1\varphi_1\). Then \(u_1\in C^1(\Omega)\). Let \(f_1:=f_0-c_1L\varphi_1\). As in \eqref{eq:K_jump_u1}, we have
    \begin{align}\label{eq:K_jump_u1_1}
    \begin{split}
        \frac{d^2}{dx^2}(u_1*K)(x)
        &=K(-\delta^+)u_1'(x+\delta)-K(\delta^-)u_1'(x-\delta)+(K(0^+)-K(0^-))u_1'(x)\\
        &\quad +K'(-\delta^+)u_1(x+\delta)-K'(\delta^-)u_1(x-\delta)+(K'(0^+)-K'(0^-))u_1(x)\\
        &\quad +(u_1*K'')(x).
    \end{split}
    \end{align}
    Since \(u_1\in L^2(\mathbb{R})\) and \(K''\in L^2(\mathbb{R})\), the convolution \(u_1*K''\) is continuous on \(\mathbb{R}\). Because \(u_1\in C^1(\Omega)\), it follows from \eqref{eq:K_jump_u1_1} that \((u_1*K)''\in C^0(\Omega)\).

    From the decomposition
    \[
    u_1*K-\alpha u_1=f_1,
    \]
    we obtain \(f_1''\in C^0(\Omega\setminus\{0\})\). Using Lemma~\ref{lemma:Deri_K},
    \[
    f_1''=f''-c_0K'-c_1K
    \]
    almost everywhere, which implies
    \[
    f''\in C^0(\Omega\setminus\{0,\pm\delta\}).
    \]
    Repeating the argument inductively yields
    \[
    f\in C^\infty(\Omega\setminus\{0,\pm\delta\}).
    \]
\end{enumerate}
\end{proof}

Theorem~\ref{thm:K_jump} is illustrated by the truncated Gaussian kernel $G$ in \eqref{example_kernel_3},
which is smooth on $\mathbb{R}\setminus\{\pm\delta\}$ and has jump discontinuities at $x=\pm\delta$.
A direct computation gives
\begin{align*}
    L_G\varphi_0(x)=
    \begin{cases}
        \dfrac{-\delta \sqrt{\pi}c_G}{2}\left(\erf\!\left(\frac{-x}{\delta}\right)-\erf(1)\right), & -\delta<x<0,\\[1ex]
        \dfrac{\delta \sqrt{\pi}c_G}{2}\left(\erf\!\left(\frac{x}{\delta}\right)-\erf(1)\right), & 0\le x<\delta,\\[1ex]
        0, & \text{otherwise},
    \end{cases}
\end{align*}
and
\begin{align*}
    (L_G\varphi_0)'(x)=
    \begin{cases}
        c_G\exp\!\left(-\dfrac{x^2}{\delta^2}\right), & -\delta<x<\delta,\; x\neq 0,\\
        0, & x<-\delta \text{ or } x>\delta.
    \end{cases}
\end{align*}
This kernel satisfies the hypotheses of Theorem~\ref{thm:K_jump}. Therefore,
\[
L_G\varphi_0\in C^\infty(\mathbb{R}\setminus\{0,\pm\delta\})
\qquad \text{and} \qquad
0<|[(L_G\varphi_0)']_{\pm\delta}|<\infty.
\]

\begin{figure}[h!]
\centering
\includegraphics[scale=0.5]{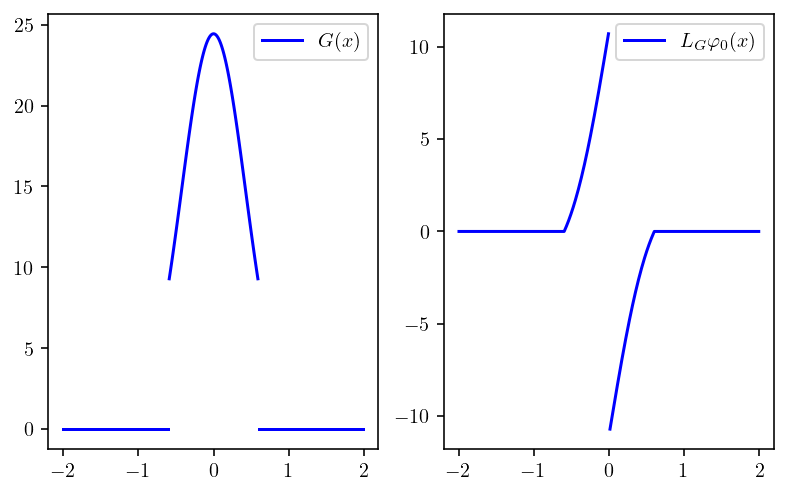}
\caption{Graphs of the kernel \(G(x)\) (left) and \(L_G\varphi_0(x)\) (right) for \(\delta=0.6\).}
\label{fig:kernel_3}
\end{figure}

Theorem~\ref{thm:K_jump} shows that discontinuities of the kernel at the horizon generate a discrete propagation of regularity loss to translated points \(\pm k\delta\). This phenomenon is a direct consequence of the sharp cutoff in the kernel and should be taken into account when assessing whether such kernels are appropriate for a given modeling purpose.

A closely related variant arises when a kernel \(K\) is continuous at \(\pm\delta\), but \(K'\) has jump discontinuities there. In that case, the first nontrivial jump appears one derivative later.

\begin{theorem}\label{thm:K_jump1}
Assume that the kernel \(K\) satisfies Assumptions (A1)--(A2), and, in addition, that
\[
K, K', \ldots, K^{(j)}, \ldots \in L^\infty(\mathbb{R}) \quad \text{for all } j,
\]
that \(K\) is continuous at \(\pm \delta\), and that \(K'\) has jump discontinuities at \(\pm \delta\).
\begin{enumerate}
    \item Suppose that \(f\) satisfies Assumption (B) and \(b\in W^{1,2}(\Omega_I)\). Then, for the solution \(u\in L^2(\Omega\cup\Omega_I)\) of \eqref{eq:Poisson}, and for every integer \(k\ge 1\) such that the points \(\pm k\delta\) belong to \(\Omega\), one has
    \[
    0<|[u^{(k+1)}]_{\pm k\delta}|<\infty.
    \]
    
    \item Suppose that \(u\) satisfies Assumption (B). Then, for \(f:=Lu\),
    \[
    0<|[f'']_{\pm\delta}|<\infty,
    \]
    and
    \[
    f\in C^\infty(\Omega\setminus\{0,\pm\delta\}).
        \]
\end{enumerate}
\end{theorem}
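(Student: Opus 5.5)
The plan is to follow the structure of the proof of Theorem~\ref{thm:K_jump}, shifted by one derivative. The shift comes from the continuity of \(K\) at \(\pm\delta\), which kills the boundary terms \(K(-\delta^+)u_0(x+\delta)\) and \(K(\delta^-)u_0(x-\delta)\) in \eqref{eq:K_jump_u0}, since \(K(\pm\delta^\mp)=0\) by compact support. Throughout, \(K\in L^\infty\) with compact support gives \(K,K',K''\in L^2(\mathbb{R})\), so every convolution of an \(L^2\) function with \(K^{(m)}\) is continuous.

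\textbf{Part (1).} As before, \(u*K\) is continuous, so \(u\) jumps only at \(0\), with \(c_0=[u]_0=-[f]_0/\alpha\neq0\). Set \(u_0=u-c_0\varphi_0\) and \(f_0=f-c_0L\varphi_0\). Differentiating via Lemma~\ref{lemma: Leibniz_easy} now gives \((u_0*K)'=u_0*K'\), which is continuous; note \(K(0^+)=K(0^-)\) because \(K\) is bounded and smooth near \(0\) in this setting. Since \(f_0'=f'-c_0K\) and \(K\) is now continuous at \(\pm\delta\), the function \(u_0'\) jumps only at \(0\).

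Let \(c_1=[u_0']_0\), \(u_1=u_0-c_1\varphi_1\) and \(f_1=f_0-c_1L\varphi_1\). Then \(u_1\in C^1(\Omega)\), and Theorem~\ref{thm:Radu_mod} gives \(u_1\in W^{1,2}\) as in the proof of Theorem~\ref{thm:K_unbounded1}. Differentiating twice yields
\[
(u_1*K)''=K'(-\delta^+)u_1(x+\delta)-K'(\delta^-)u_1(x-\delta)+(u_1*K'').
\]
This is continuous because \(u_1\) is continuous. By Lemma~\ref{lemma:Deri_K}, \(f_1''=f''-c_0K'-c_1K\). Since \(K'\) jumps at \(\pm\delta\) and \(c_0\neq0\), \(f_1''\) jumps at \(\pm\delta\). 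The decomposition \(u_1*K-\alpha u_1=f_1\) then gives \(0<|[u_1'']_{\pm\delta}|<\infty\), and hence \(0<|[u'']_{\pm\delta}|<\infty\), because \(\varphi_0,\varphi_1\) are smooth away from \(0\). This is the case \(k=1\).

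\textbf{Induction.} The induction hypothesis is that \(u^{(k+1)}\) jumps at \(\pm k\delta\) while lower derivatives are continuous there. After subtracting the jump components \(c_m\varphi_m\) at \(0\), the function \(u_{k+1}\) has one more derivative. Differentiating \((u_{k+1}*K)\) a total of \(k+2\) times produces the terms
\[
K'(-\delta^+)u_{k+1}^{(k)}(x+\delta)-K'(\delta^-)u_{k+1}^{(k)}(x-\delta).
\]
The jump in \(u^{(k+1)}\) at \(\pm k\delta\) then transfers to a jump in \((u*K)^{(k+2)}\) at \(\pm(k+1)\delta\). The terms \(f^{(k+2)}\) and \(L\varphi_m^{(k+2)}\) are smooth near \(\pm(k+1)\delta\) for \(k\ge1\), because \(L\varphi_m\) is only singular at \(0,\pm\delta\). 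The decomposition therefore transfers the jump to \(u^{(k+2)}\) at \(\pm(k+1)\delta\) with magnitude \(\pm K'(\mp\delta^\pm)[u^{(k+1)}]_{\pm k\delta}/\alpha\), which is nonzero.

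\textbf{Main obstacle.} The hard step is bookkeeping rather than analysis. We must ensure that the jump at \(\pm(k+1)\delta\) is not cancelled by contributions from the opposite translate, or from the term \(u_{k+1}^{(k)}\) evaluated at \(x\pm\delta\) hitting other singular points. I would handle this by tracking, at each induction step, the exact finite set of singular points \(\{0,\pm\delta,\dots,\pm k\delta\}\) and the derivative order at which each first becomes discontinuous. At \(\pm(k+1)\delta\) only the single shifted term sees a discontinuity of order \(k+1\); all other terms are smoother there, so no cancellation occurs. The differentiation of convolutions beyond \(W^{1,2}\) uses Lemma~\ref{lemma: Leibniz_easy} piecewise on subintervals avoiding the finitely many jump points.

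\textbf{Part (2).} Here \(u\) is given with Assumption (B), so \(u_0\) and \(u_1\in C^1(\Omega)\) are piecewise \(C^N\) with singularities only at \(0\). Then \((u_1*K)''\) is continuous, which gives \(f_1''\in C^0(\Omega\setminus\{0\})\). From \(f_1''=f''-c_0K'-c_1K\) with \(c_0\neq0\), we get \(0<|[f'']_{\pm\delta}|<\infty\). Iterating, all higher \((u_m*K)^{(m+1)}\) are continuous away from \(0\), because \(u\) has no singularities other than \(0\). Together with the smoothness of \(L\varphi_m\) off \(\{0,\pm\delta\}\), this gives \(f\in C^\infty(\Omega\setminus\{0,\pm\delta\})\), within the regularity afforded by \(N\), as in Theorem~\ref{thm:K_jump}.
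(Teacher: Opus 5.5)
\textbf{The induction step fails, and it cannot be repaired.} Your base case $k=1$ and your Part~(2) are sound. They follow the route the paper intends: the paper gives no separate proof of this theorem, only the remark that the argument of Theorem~\ref{thm:K_jump} works ``one derivative later''.

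The problem is a derivative-order mismatch. By the same Leibniz computation as in your base case, the jump of $K'$ at $\pm\delta$ enters $(u*K)^{(n)}$ only through
\[
K'(-\delta^+)\,u^{(n-2)}(x+\delta)-K'(\delta^-)\,u^{(n-2)}(x-\delta).
\]
Indeed, the term you display for $n=k+2$ carries $u^{(k)}$, not $u^{(k+1)}$. Under your own induction hypothesis, $u^{(k)}$ is continuous at $\pm k\delta$. So this term produces no jump at $\pm(k+1)\delta$, and the claimed transfer to $u^{(k+2)}$ does not occur. A jump of $u^{(k+1)}$ at $k\delta$ can first surface only in $(u*K)^{(k+3)}$.

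Your base case already shows this two-derivative shift: the order-$0$ jump of $u$ at $0$ reappears as a jump of $u''$ at $\delta$. Each translation by $\delta$ costs one derivative for the jump of $K'$ and one for the integration in the convolution. When $K$ itself jumps, as in Theorem~\ref{thm:K_jump}, the cost is one derivative per step, which is where the pattern $u^{(k)}$ comes from. Correct bookkeeping therefore gives first jumps in $u^{(2k)}$ at $\pm k\delta$, which coincides with $k+1$ only when $k=1$.

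\textbf{A concrete counterexample.} The hat kernel $K(z)=c\,(1-|z|/\delta)\chi_{(-\delta,\delta)}(z)$ satisfies every hypothesis of the theorem. Distributionally,
\[
(u*K)''(x)=\frac{c}{\delta}\bigl(u(x+\delta)-2u(x)+u(x-\delta)\bigr).
\]
Since $u*K'$ is continuous, $u$ and $u'$ are continuous on $\Omega\setminus\{0\}$. Combining the identity with $\alpha u=u*K-f$ gives the following.
\begin{enumerate}
\item $[u'']_{\delta}=c\,c_0/(\alpha\delta)\neq0$.
\item Differentiating once more, $(u*K)'''$ near $2\delta$ involves only $u'$ near $\delta,2\delta,3\delta$, so $[u''']_{2\delta}=0$.
\item $[u^{(4)}]_{2\delta}=c\,[u'']_{\delta}/(\alpha\delta)\neq0$.
\end{enumerate}
This assumes $\Omega$ is large enough that the boundary layer at $a_1,a_2$ does not reach these points. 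So Part~(1) as stated fails for $k\ge2$, and no proof can establish it. What you can prove is the case $k=1$, or the corrected statement: $u^{(m)}$ is continuous at $\pm k\delta$ for $m<2k$, and $0<|[u^{(2k)}]_{\pm k\delta}|<\infty$.

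\textbf{Two smaller points.} First, your formula for $(u_1*K)''$ omits the term $(K'(0^+)-K'(0^-))u_1(x)$. It is nonzero for the hat kernel; it is harmless at that step because $u_1$ is continuous, but it must be tracked in any bookkeeping. Second, $K(0^+)=K(0^-)$ follows from radial symmetry, not from smoothness of $K$ at $0$, which is not assumed.
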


\begin{remark}
The propagation of discontinuities to higher derivatives in nonlocal models was previously observed in \cite{silling2003deformation}, where it is shown (formally, using delta-function arguments) that discontinuities in the solution generate further discontinuities at translated locations, with increasing derivative order. In that setting, the translation distance is determined by the location of discontinuities in the integral kernel and is therefore not restricted to the horizon endpoints.

The results of Theorems~\ref{thm:K_jump} and ~\ref{thm:K_jump1} are consistent with this general mechanism, but provide a rigorous formulation in the present framework. Moreover, our analysis extends directly to kernels with jump discontinuities at arbitrary locations, not only at $\pm\delta$. We focus here on jumps at the horizon endpoints $\pm\delta$, as this is a common feature of many kernels used in applications.
\end{remark}
\subsection{Compactly supported smooth kernel}\label{subsection:smooth}

We finally consider the case of a compactly supported smooth kernel. In this setting, no new singular behavior is created by the operator, and the source term and the solution have equivalent piecewise smooth regularity.

\begin{theorem}\label{thm:K_smooth}
Assume that the kernel \(K\) satisfies Assumptions (A1)--(A2) and, in addition, that \(K\in C_c^\infty(\mathbb{R})\). Let \(f\in L^2(\Omega)\) and \(u\in L^2(\Omega\cup\Omega_I)\) satisfy \(Lu=f\) in \(\Omega\). Then \(f\) satisfies Assumption (B) if and only if \(u\) satisfies Assumption (B). Furthermore,
\begin{align}\label{eq:K_smooth}
    [u^{(k)}]_0=-\frac{1}{\alpha}[f^{(k)}]_0, \qquad k=0,1,2,\ldots
\end{align}
\end{theorem}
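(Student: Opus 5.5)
The plan is to exploit the fact that for \(K\in C_c^\infty(\mathbb{R})\) the convolution \(u*K\) is \(C^\infty\) on \(\mathbb{R}\) as soon as \(u\in L^2(\mathbb{R})\) (or \(L^1_{\mathrm{loc}}\)). Indeed, \((u*K)^{(k)}=u*K^{(k)}\) for every \(k\). This follows from Lemma~\ref{lemma:Leibniz}, or simply by differentiating \(\int u(y)K(x-y)\,dy\) under the integral, since \(K^{(k)}\) is bounded and compactly supported. Each \(u*K^{(k)}\) is continuous because \(u\in L^2\) and \(K^{(k)}\in L^2\). This is the key simplification compared with Theorems~\ref{thm:K_unbounded}--\ref{thm:K_jump1}. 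No jump corrections \(c_m\varphi_m\) and no boundary terms from Lemma~\ref{lemma: Leibniz_easy} are needed, because \(K\) has no jumps at \(0\) or \(\pm\delta\) and all its derivatives vanish there. With \(\alpha=\|K\|_{L^1}>0\), the decomposition \eqref{eq:decomposition} gives, in \(\Omega\),
\[
u=\frac{1}{\alpha}\bigl(u*K-f\bigr),\qquad f=u*K-\alpha u,
\]
where \(u*K\in C^\infty(\overline{\Omega})\) and all its derivatives are bounded on \(\Omega\).

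\emph{Direction \(u\Rightarrow f\).} Suppose \(u\) satisfies Assumption~(B). Then \(f=u*K-\alpha u\) is a sum of a \(C^\infty\) function with bounded derivatives on \(\overline\Omega\) and \(-\alpha u\). Hence \(f\) is \(C^N\) on \((a_1,0)\) and \((0,a_2)\), and \(f^{(j)}\) is bounded there for \(j\le N\). Its jump at \(0\) is \([f]_0=-\alpha[u]_0\neq 0\), so \(f\) satisfies Assumption~(B).

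\emph{Direction \(f\Rightarrow u\).} Suppose \(f\) satisfies Assumption~(B). Here I must use the a priori information \(u\in L^2(\Omega\cup\Omega_I)\), which is all we need to conclude \(u*K\in C^\infty\). Then \(u=\alpha^{-1}(u*K-f)\) on \(\Omega\) immediately inherits the \(C^N\) regularity of \(f\) on each side of \(0\), with bounded derivatives. Its jump at \(0\) is \([u]_0=-\alpha^{-1}[f]_0\neq0\). One technical point: \(u\) is only defined a.e., so I would pass to the representative given by the formula. That representative is consistent with \(u\) in \(L^2\), and this is the one used to state (B).

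Finally, \eqref{eq:K_smooth} follows by differentiating \(u*K-\alpha u=f\) \(k\) times on each side of \(0\). Since \((u*K)^{(k)}=u*K^{(k)}\) is continuous at \(0\), it drops out of the jump, giving \([f^{(k)}]_0=-\alpha[u^{(k)}]_0\) for each \(k\le N\) (and for all \(k\) when both sides are piecewise \(C^\infty\)).

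The main obstacle, though mild, is justifying rigorously that \(u*K\in C^\infty\) with \((u*K)^{(k)}=u*K^{(k)}\) using only \(u\in L^2\). I would argue this via dominated convergence on difference quotients: \(|K^{(k)}(x+h-y)-K^{(k)}(x-y)|\le |h|\,\|K^{(k+1)}\|_\infty\chi_{[-\delta-1,\delta+1]}(x-y)\) for \(|h|\le1\), and \(u\chi\in L^1\) by Cauchy--Schwarz. Induction on \(k\) then gives the claim. A secondary point is that the derivatives of \(u*K\) are bounded on \(\overline\Omega\), which follows from \(\|u*K^{(k)}\|_\infty\le\|u\|_{L^2}\|K^{(k)}\|_{L^2}\). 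This boundedness is what yields the boundedness requirement in Assumption~(B) in both directions.
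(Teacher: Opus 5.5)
Your proof is correct and follows the paper's argument: since $K\in C_c^\infty(\mathbb{R})$, the convolution $u*K$ is $C^\infty$, and the decomposition $u*K-\alpha u=f$ transfers piecewise regularity and jump sizes between $u$ and $f$ in both directions. What you add are details the paper leaves implicit: the justification of $(u*K)^{(k)}=u*K^{(k)}$, the uniform bound $\|u*K^{(k)}\|_\infty\le\|u\|_{L^2}\|K^{(k)}\|_{L^2}$ needed for the boundedness part of Assumption~(B), the choice of pointwise representative of $u$, and the observation that the jump relation only makes sense for orders $k$ at which the one-sided limits exist.
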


\begin{proof}
Since \(K\in C_c^\infty(\mathbb{R})\), the convolution \(u*K\) belongs to \(C^\infty(\mathbb{R})\). The decomposition
\[
u*K-\alpha u=f
\]
therefore implies that \(u\) and \(f\) have the same piecewise smooth regularity. The jump relation \eqref{eq:K_smooth} follows immediately by comparing the left and right limits of the derivatives at \(0\).
\end{proof}

An example of a kernel that statisfies the conditions in Theorem~\ref{thm:K_smooth} is the kernel $Q$ given by \eqref{K_smooth}.

Theorem~\ref{thm:K_smooth} shows that, in contrast to the kernels considered in the previous subsections, a compactly supported \(C^\infty\) kernel does not introduce additional singular behavior into the solution. In particular, for piecewise smooth source, there is no derivative blow-up at the jump location and no cascade of regularity loss at translated points such as \(\pm k\delta\). Thus, the solution inherits exactly the same piecewise smooth structure as the source term, with the jump magnitudes related by \eqref{eq:K_smooth}. From a modeling perspective, this suggests that smooth kernels may be preferable when one wishes to avoid the unintended regularity effects produced by singular kernels or by kernels with jump discontinuities at the horizon.

\section{Semi-analytic spectral method}\label{section:semi-analytic}

In this section, we introduce a semi-analytic spectral method for the accurate resolution of jump discontinuities in the solution of \eqref{eq:Poisson}. The main idea is to transform the original discontinuous problem into a sequence of smoother problems that are better suited for spectral approximation.

We begin with a kernel $K\in C_c^\infty(\mathbb{R})$ satisfying Assumptions~(A1) and~(A2) and
assume that the source term $f$ satisfies Assumption~(B). 
Then, 
 $u$ has a jump at $x=0$ by Theorem~\ref{thm:K_smooth}. 
 
 To simplify notation, throughout this section we suppress the dependence on $0$ and write
  $[\psi]=[\psi]_0$ for the jump at $x=0$.

 By Theorem~\ref{thm:K_smooth}, the solution $u$ also satisfies Assumption~(B), and
\begin{align*}
    [u^{(k)}]=-\frac{1}{\alpha}[f^{(k)}],
\end{align*}
for all $k=0,1,2,\ldots$. Let
\begin{align*}
    c_0:=[u]=-\frac{1}{\alpha}[f],
\end{align*}
and define
\begin{align*}
    u_0:=u-c_0\varphi_0
\end{align*}
on $\mathbb{R}$. Since both $u$ and $\varphi_0$ belong to $C^\infty(\Omega\setminus\{0\})$, it follows that $u_0\in C^\infty(\Omega\setminus\{0\})$. Moreover, the jump at $0$ is removed because
\begin{align*}
    [u_0]=[u]-c_0[\varphi_0]=[u]-c_0=0.
\end{align*}
Hence $u_0\in C(\Omega)$, and therefore $Lu_0=:f_0\in C(\Omega)$ by Lemma~\ref{lemma:Cont}. Since
\begin{align*}
    f_0=f-c_0L\varphi_0,
\end{align*}
Theorem~\ref{thm:K_smooth} and Lemma~\ref{lemma:Deri_K} imply that
\begin{align*}
    c_1:=[u']=[u_0']=-\frac{1}{\alpha}[f_0']=-\frac{1}{\alpha}[f'].
\end{align*}
We then define
\begin{align*}
    u_1:=u_0-c_1\varphi_1
\end{align*}
on $\mathbb{R}$. Since $u_1'=u_0'-c_1\varphi_0$ for all $x\neq 0$, we have $u_1\in C(\Omega)$ and $u_1'\in C(\Omega\setminus\{0\})$. The jump in the derivative is removed because
\begin{align*}
    [u_1']=[u_0']-c_1[\varphi_0]=[u_0']-c_1=0.
\end{align*}
Thus $u_1'\in C(\Omega)$, which implies that both $u_1$ and $f_1:=Lu_1$ belong to $C^1(\Omega)$.

Proceeding inductively, we define the sequences $\{c_k\}$, $\{u_k\}$, and $\{f_k\}$ by
\begin{align}\label{smoothing}
    \begin{cases}
        c_k:=-\dfrac{1}{\alpha}\left[f_{k-1}^{(k)}\right],\\[1mm]
        u_k:=u_{k-1}-c_k\varphi_k,\\[1mm]
        f_k:=f_{k-1}-c_kL\varphi_k,
    \end{cases}
\end{align}
for $k\ge 0$, with $u_{-1}\equiv u$ and $f_{-1}\equiv f$. By Theorem~\ref{thm:K_smooth}, and by repeating the above argument inductively, it follows that
\[
u_k,\; f_k\in C^k(\Omega), \qquad k\ge 0.
\]
This motivates the two successive algorithms, Algorithms~\ref{alg:smooth} and~\ref{alg:resolve}, for resolving the jump discontinuities in $u$.

\begin{algorithm}[H]
\begin{algorithmic}
\Function{smooth}{$f,M$}
\State $f_{-1}\gets f$
\For{$k=0,1,2,\ldots,M$}
    \State Compute $c_k$ in \eqref{smoothing} symbolically
    \Comment Magnitude of $[u^{(k)}]$
    \State Compute $L\varphi_k$ symbolically
    \State $f_k\gets f_{k-1}-c_kL\varphi_k$
    \Comment $f_k\in C^k(\Omega)$
\EndFor
\State \Return $f_M,c_0,\ldots,c_M$
\EndFunction
\end{algorithmic}
\caption{Successive smoothing procedure for constructing the function $f_M$ and the coefficients $c_0,\ldots,c_M$ in \eqref{smoothing}.}
\label{alg:smooth}
\end{algorithm}

\begin{algorithm}[H]
\begin{algorithmic}
\Function{resolve}{$f,b,M$}
\State $f_M,c_0,c_1,\ldots,c_M\gets \textsc{smooth}(f,M)$
\State $b_M\gets b-c_0\varphi_0-c_1\varphi_1-\cdots-c_M\varphi_M$
\State Solve $Lu_M=f_M$ in $\Omega$, \quad $u_M=b_M$ in $\Omega_I$
\Comment Using a spectral solver
\State $u\gets u_M+\sum_{k=0}^M c_k\varphi_k$
\Comment Semi-analytic reconstruction
\State \Return $u$
\EndFunction
\end{algorithmic}
\caption{Construction of the semi-analytic approximation to the solution of \eqref{eq:Poisson}.}
\label{alg:resolve}
\end{algorithm}

One limitation of using kernels in $C_c^\infty(\mathbb{R})$ is that their Fourier multipliers $m$, defined through
\begin{align*}
    \widehat{Lu}=m\widehat{u},
\end{align*}
are often difficult to compute explicitly. As noted in \cite{alali2021fourier}, this multiplier is a central component of the spectral method. As a practical trade-off, we therefore reduce the regularity of $K$ while retaining enough smoothness for Algorithms~\ref{alg:smooth} and~\ref{alg:resolve} to remain valid up to a finite level $M$.

For example, consider the kernel
\begin{align}\label{eq:K_numerical}
    K(x,y)=K(|x-y|)=\frac{105}{\delta^3}\left(1-\left|\frac{x-y}{\delta}\right|\right)^4\chi_{(-\delta,\delta)}(x-y).
\end{align}
Then, $K\in C^3(\mathbb{R}\setminus\{0\})$, and one can verify that Algorithms~\ref{alg:smooth} and~\ref{alg:resolve} apply up to level $M=4$.     Higher levels of smoothing can be achieved by choosing a smoother kernel $K$ by a slight modification to \eqref{eq:K_numerical}.
\begin{figure}[h!]
\centering
\includegraphics[scale=1.0]{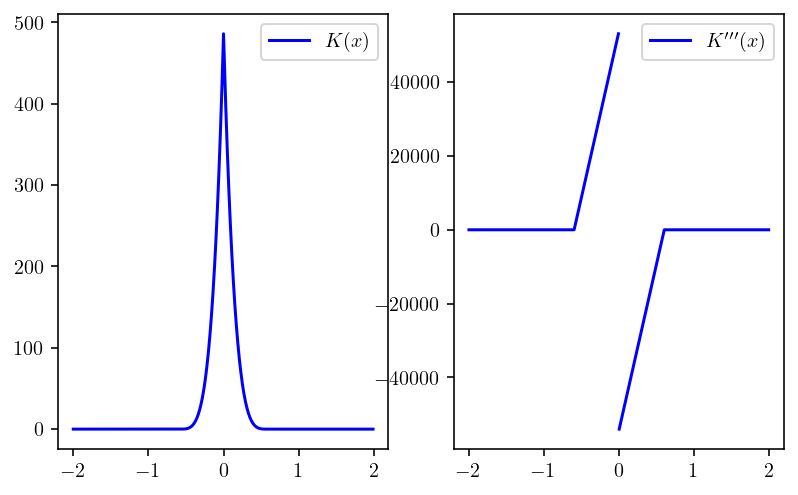}
\caption{Graphs of the kernel \(K(x)\) (left) and \(K^{\prime\prime\prime}(x)\) (right) for \(\delta=0.6\).}
\label{fig:kernel_3}
\end{figure}

\section{Numerical results}\label{section:num}

In this section, we present numerical experiments illustrating the performance of the proposed semi-analytic spectral method. We consider both manufactured solutions, for which the exact solution is known, and a problem with a prescribed discontinuous source term, for which the solution is computed numerically. In all cases, the results demonstrate that successive smoothing significantly improves both accuracy and convergence.

\subsection{Manufactured solutions}

We begin with manufactured solutions $u$ that are continuous across the interface between $\Omega$ and $\Omega_I$. For each such choice of $u$, the corresponding source term $f$ and constraint $b$ in \eqref{eq:Poisson} are generated from the exact solution. More precisely, $f$ is computed symbolically from the definition of the operator $L$ in \eqref{eq:L_operator}, while $b$ is obtained by restricting $u$ to $\Omega_I$.

In Examples~\ref{ex1} and~\ref{ex2}, we take
$
\Omega=(-8,8), \;\; \delta=1.6,
$
and use the kernel $K$ given by \eqref{eq:K_numerical}. We denote by $N$ the number of grid points in $\Omega$, by $M$ the smoothing level used in Algorithms~\ref{alg:smooth} and~\ref{alg:resolve}, and by $u_N^M$ the corresponding numerical approximation.

\begin{example}\label{ex1}
Consider the manufactured solution
\begin{align*}
    u(x)=
    \begin{cases}
        e^x, & x\ge 0,\\
        0, & x<0.
    \end{cases}
\end{align*}
This function satisfies Assumption~(B). The corresponding source term $f=Lu$ is computed symbolically using \textsc{Mathematica}; its explicit formula is provided in Appendix~\ref{appendix}. We then solve \eqref{eq:Poisson} using the semi-analytic spectral method described in Algorithms~\ref{alg:smooth} and~\ref{alg:resolve}.
\end{example}

For each pair $(N,M)$, we define the error by
\[
E_N^M=\|u_{\mathrm{exact}}-u_N^M\|_\infty.
\]
The observed order of convergence is computed from the errors corresponding to consecutive values of $N$.

Table~\ref{tab:example1} and Figure~\ref{fig:ex1} show the effect of smoothing on the numerical approximation. Without smoothing, the method exhibits essentially first-order convergence, reflecting the influence of the jump discontinuity. A substantial improvement is already visible at smoothing level $M=0$, where the method becomes approximately second order. For $M=1$ and $M=2$, the observed convergence rate is close to fourth order, while for $M=3$ and $M=4$ the method exhibits very rapid convergence, with errors reaching near machine precision on relatively modest grids. These results clearly illustrate the effectiveness of the smoothing procedure in restoring high-order accuracy.

\renewcommand{\arraystretch}{1.2}
\begin{table}[h]
    \centering
    \begin{tabularx}{\textwidth}{
  *{5}{| >{\centering\arraybackslash}X}|
  }

\hline
\multirow{2}{*}{$N$} & \multicolumn{2}{c|}{No smoothing} & \multicolumn{2}{c|}{Smoothing level $M=0$} \\ \cline{2-5}
& $L^\infty$ error & Order & $L^\infty$ error & Order \\ \cline{1-5}
$41$  & $12.09$ & -    & $0.84$ & - \\
$61$  & $8.31$  & $0.93$ & $0.38$ & $1.95$ \\
$81$  & $6.33$  & $0.95$ & $0.22$ & $1.97$ \\
$101$ & $5.11$  & $0.96$ & $0.14$ & $1.98$ \\
$121$ & $4.28$  & $0.97$ & $0.10$ & $1.98$ \\
\hline
\hline
\multirow{2}{*}{$N$} & \multicolumn{2}{c|}{Smoothing level $M=1$} & \multicolumn{2}{c|}{Smoothing level $M=2$} \\ \cline{2-5}
& $L^\infty$ error & Order & $L^\infty$ error & Order \\ \cline{1-5}
$41$  & $3.26\cdot 10^{-3}$ & -    & $2.52\cdot 10^{-3}$ & - \\
$61$  & $5.94\cdot 10^{-4}$ & $4.20$ & $4.58\cdot 10^{-4}$ & $4.20$ \\
$81$  & $1.86\cdot 10^{-4}$ & $4.03$ & $1.45\cdot 10^{-4}$ & $3.99$ \\
$101$ & $7.6\cdot 10^{-5}$  & $3.99$ & $6.0\cdot 10^{-5}$  & $3.99$ \\
$121$ & $3.7\cdot 10^{-5}$  & $4.00$ & $2.9\cdot 10^{-5}$  & $3.99$ \\
\hline
\hline
\multirow{2}{*}{$N$} & \multicolumn{2}{c|}{Smoothing level $M=3$} & \multicolumn{2}{c|}{Smoothing level $M=4$} \\ \cline{2-5}
& $L^\infty$ error & Order & $L^\infty$ error & Order \\ \cline{1-5}
$41$  & $9.59\cdot 10^{-4}$ & -     & $9.60\cdot 10^{-4}$ & - \\
$61$  & $1.62\cdot 10^{-5}$ & $10.07$ & $1.62\cdot 10^{-5}$ & $10.07$ \\
$81$  & $8.77\cdot 10^{-7}$ & $10.13$ & $8.81\cdot 10^{-7}$ & $10.12$ \\
$101$ & $9.05\cdot 10^{-8}$ & $10.18$ & $9.14\cdot 10^{-8}$ & $10.15$ \\
$121$ & $1.64\cdot 10^{-8}$ & $9.36$  & $1.43\cdot 10^{-8}$ & $10.16$ \\
\hline
\end{tabularx}
    \caption{Errors and observed convergence rates for Example~\ref{ex1} at different smoothing levels.}
    \label{tab:example1}
\end{table}

\begin{figure}[ht]
\centering
\includegraphics[scale=0.3]{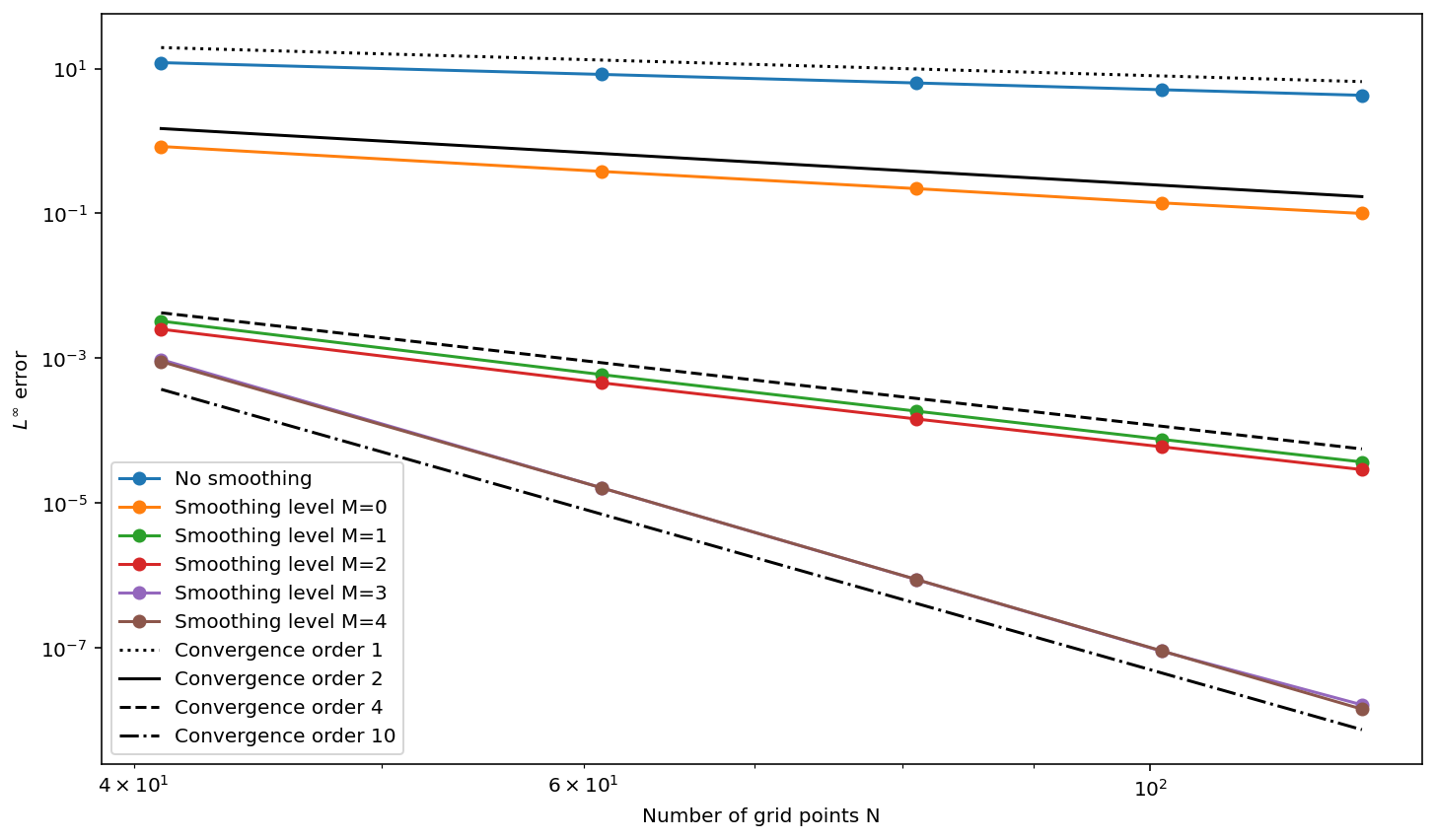}
\caption{Observed convergence rates for Example~\ref{ex1} at different smoothing levels.}
\label{fig:ex1}
\end{figure}

\begin{example}\label{ex2}
Consider the piecewise smooth manufactured solution
\begin{align*}
    u(x)=
    \begin{cases}
        \sin(x+2), & x\le -2,\\
        0, & -2<x<4,\\
        e^{x-4}, & x\ge 4.
    \end{cases}
\end{align*}
This function has jump discontinuities at $x=-2$ and $x=4$. As in Example~\ref{ex1}, the source term $f=Lu$ is computed symbolically using \textsc{Mathematica}, and its explicit expression is listed in Appendix~\ref{appendix}. The semi-analytic spectral method is then applied, with the smoothing procedure carried out separately at each jump.
\end{example}

The corresponding results are presented in Table~\ref{tab:example2} and Figure~\ref{fig:ex2}. The same general pattern observed in Example~\ref{ex1} is present here as well. Without smoothing, the method again behaves essentially as a first-order scheme. Once smoothing is introduced, the accuracy improves markedly. At levels $M=1$ and $M=2$, the method achieves approximately fourth-order convergence, while at levels $M=3$ and $M=4$ the method again displays much faster convergence. These experiments confirm that the proposed approach remains highly effective even in the presence of multiple jump discontinuities.

\renewcommand{\arraystretch}{1.2}
\begin{table}[ht]
    \centering
    \begin{tabularx}{\textwidth}{
  *{5}{| >{\centering\arraybackslash}X}|
  }

\hline
\multirow{2}{*}{$N$} & \multicolumn{2}{c|}{No smoothing} & \multicolumn{2}{c|}{Smoothing level $M=0$} \\ \cline{2-5}
& $L^\infty$ error & Order & $L^\infty$ error & Order \\ \cline{1-5}
$21$  & $16.15$ & -    & $2.92$ & - \\
$41$  & $9.48$  & $0.77$ & $0.47$ & $2.64$ \\
$81$  & $4.88$  & $0.96$ & $0.12$ & $1.97$ \\
$121$ & $3.29$  & $0.98$ & $5.32\cdot 10^{-2}$ & $1.99$ \\
$161$ & $2.48$  & $0.98$ & $3.0\cdot10^{-2}$ & $1.99$ \\
\hline
\hline
\multirow{2}{*}{$N$} & \multicolumn{2}{c|}{Smoothing level $M=1$} & \multicolumn{2}{c|}{Smoothing level $M=2$} \\ \cline{2-5}
& $L^\infty$ error & Order & $L^\infty$ error & Order \\ \cline{1-5}
$21$  & $2.60\cdot 10^{-2}$ & -    & $2.32\cdot 10^{-2}$ & - \\
$41$  & $2.72\cdot 10^{-3}$ & $3.26$ & $3.0\cdot 10^{-3}$ & $2.95$ \\
$81$  & $1.76\cdot 10^{-4}$ & $3.95$ & $1.93\cdot 10^{-4}$ & $3.96$ \\
$121$ & $3.5\cdot 10^{-5}$  & $3.99$ & $3.8\cdot 10^{-5}$ & $3.98$ \\
$161$ & $1.1\cdot 10^{-5}$  & $4.00$ & $1.2\cdot 10^{-5}$ & $3.99$ \\
\hline
\hline
\multirow{2}{*}{$N$} & \multicolumn{2}{c|}{Smoothing level $M=3$} & \multicolumn{2}{c|}{Smoothing level $M=4$} \\ \cline{2-5}
& $L^\infty$ error & Order & $L^\infty$ error & Order \\ \cline{1-5}
$21$  & $1.53\cdot 10^{-2}$ & -    & $1.53\cdot 10^{-2}$ & - \\
$41$  & $1.69\cdot 10^{-5}$ & $9.83$ & $1.72\cdot 10^{-5}$ & $9.80$ \\
$81$  & $1.44\cdot 10^{-7}$ & $6.87$ & $7.98\cdot 10^{-8}$ & $7.75$ \\
$121$ & $1.25\cdot 10^{-8}$ & $6.03$ & $6.87\cdot 10^{-9}$ & $6.05$ \\
$161$ & $2.21\cdot 10^{-9}$ & $6.04$ & $1.27\cdot 10^{-9}$ & $5.88$ \\
\hline
\end{tabularx}
    \caption{Errors and observed convergence rates for Example~\ref{ex2} at different smoothing levels.}
    \label{tab:example2}
\end{table}

\begin{figure}[h!]
\centering
\includegraphics[scale=0.3]{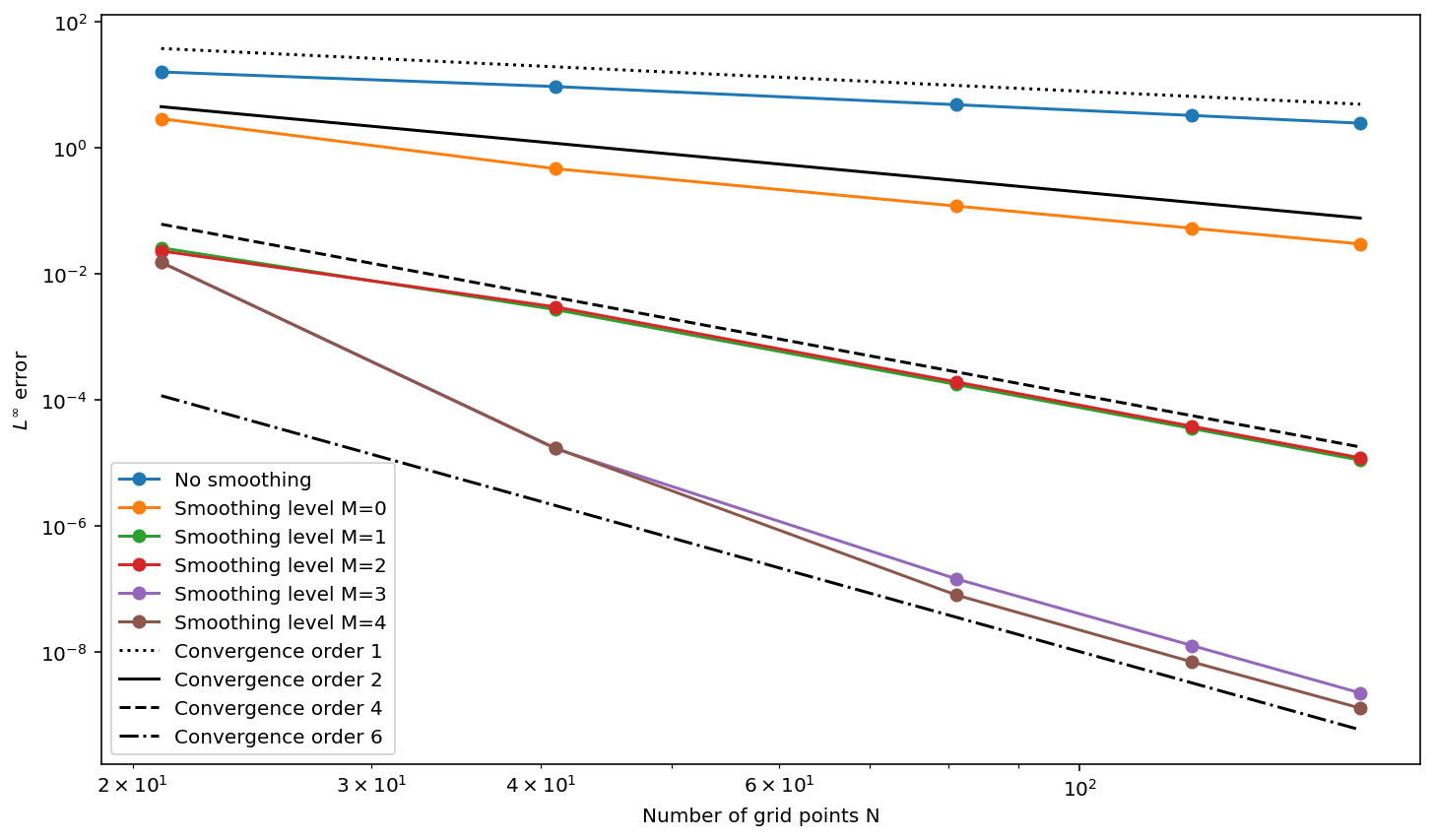}
\caption{Observed convergence rates for Example~\ref{ex2} at different smoothing levels.}
\label{fig:ex2}
\end{figure}

\subsection{Arbitrary source term}

We next consider the case in which the source term $f$ is prescribed directly and satisfies Assumption~(B), rather than being generated from a known exact solution. In this setting, the choice of the constraint $b$ on $\Omega_I$ becomes important. If $b$ is imposed arbitrarily, for example by setting $b\equiv 0$, then the corresponding solution generally develops jump discontinuities at the transition between $\Omega$ and $\Omega_I$; see \cite{mustapha2025fourier,raduneumann2025}. Since Algorithms~\ref{alg:smooth} and~\ref{alg:resolve} are designed to address discontinuities generated by the interior source term, but not those induced at the transition layer, such incompatibilities lead to a noticeable loss of accuracy.

To avoid this issue, we construct a compatible constraint $b$ numerically, as described in Appendix~\ref{appendix}. This enables the smoothing procedure to focus on the interior jumps of the solution and allows the spectral approximation to recover high-order behavior.

\begin{example}\label{ex3}
We consider the source term
\begin{align*}
    f(x)=
    \begin{cases}
        \dfrac{10}{6x^2+0.8}, & x\ge 0,\\
        \tanh(3x)+1, & x<0.
    \end{cases}
\end{align*}
This function satisfies Assumption~(B). We take
$\Omega=(-2,2), \;\; \delta=0.4,
$
and again use the kernel $K$ from \eqref{eq:K_numerical}. We denote 
by $N$ the number of grid points in $\Omega$, and by $M$ the smoothing level used in Algorithms~\ref{alg:smooth} and~\ref{alg:resolve}. The corresponding numerical approximation is denoted 
by $u_N^M$.
\end{example}

Since no exact solution is available in this case, we use the following error measurement strategy. In Table~\ref{tab:example3.3}, we compute a reference solution $u_{\mathrm{approx}}$ on a fine grid with $N=401$ points in $\Omega$, and define
\[
E_N^M=\|u_N^M-u_{\mathrm{approx}}\|_\infty,
\]
where the difference is taken after restricting $u_{\mathrm{approx}}$
 to the coarse grid on which both functions are defined. No interpolation is required, since the grids are nested.
The associated convergence rate is estimated by
\[
\frac{\log(E_{N_1}^M/E_{N_2}^M)}{\log(N_2/N_1)},
\]
where $N_1$ and $N_2$ are two consecutive values of $N$ listed in the table.

We observe the following trend: higher smoothing levels lead to smaller errors and significantly improved convergence. In particular, the transition from no smoothing to even a modest amount of smoothing produces a substantial gain, while higher smoothing levels restore the high-order accuracy expected from the spectral solver once the dominant discontinuity effects have been removed.

\begin{table}[h]
    \centering
    \begin{tabularx}{\textwidth}{
  *{5}{| >{\centering\arraybackslash}X}|
  }

\hline
\multirow{2}{*}{$N$} & \multicolumn{2}{c|}{No smoothing} & \multicolumn{2}{c|}{Smoothing level $M=0$} \\ \cline{2-5}
& $L^\infty$ error & Order & $L^\infty$ error & Order \\ \cline{1-5}
$21$  & $1.124$ & -    & $1.48\cdot 10^{-2}$ & - \\
$41$  & $0.569$ & $1.02$ & $2.72\cdot 10^{-3}$ & $2.53$ \\
$51$  & $0.457$ & $1.01$ & $1.68\cdot 10^{-3}$ & $2.22$ \\
$81$  & $0.287$ & $1.00$ & $6.33\cdot 10^{-4}$ & $2.11$ \\
$101$ & $0.230$ & $1.00$ & $4.03\cdot 10^{-4}$ & $2.05$ \\
\hline
\hline
\multirow{2}{*}{$N$} & \multicolumn{2}{c|}{Smoothing level $M=1$} & \multicolumn{2}{c|}{Smoothing level $M=2$} \\ \cline{2-5}
& $L^\infty$ error & Order & $L^\infty$ error & Order \\ \cline{1-5}
$21$  & $6.57\cdot 10^{-3}$ & -    & $2.05\cdot 10^{-3}$ & - \\
$41$  & $3.81\cdot 10^{-4}$ & $4.26$ & $7.21\cdot 10^{-5}$ & $5.00$ \\
$51$  & $1.49\cdot 10^{-4}$ & $4.29$ & $2.74\cdot 10^{-5}$ & $4.44$ \\
$81$  & $2.27\cdot 10^{-5}$ & $4.07$ & $3.87\cdot 10^{-6}$ & $4.23$ \\
$101$ & $9.23\cdot 10^{-6}$ & $4.08$ & $1.56\cdot 10^{-6}$ & $4.11$ \\
\hline
\hline
\multirow{2}{*}{$N$} & \multicolumn{2}{c|}{Smoothing level $M=3$} & \multicolumn{2}{c|}{Smoothing level $M=4$} \\ \cline{2-5}
& $L^\infty$ error & Order & $L^\infty$ error & Order \\ \cline{1-5}
$21$  & $1.28\cdot 10^{-3}$ & -    & $3.89\cdot 10^{-4}$ & - \\
$41$  & $1.66\cdot 10^{-5}$ & $6.50$ & $1.90\cdot 10^{-6}$ & $7.95$ \\
$51$  & $4.12\cdot 10^{-6}$ & $6.37$ & $4.37\cdot 10^{-7}$ & $6.73$ \\
$81$  & $2.21\cdot 10^{-7}$ & $6.32$ & $2.79\cdot 10^{-8}$ & $5.95$ \\
$101$ & $5.46\cdot 10^{-8}$ & $6.34$ & $8.02\cdot 10^{-9}$ & $5.65$ \\
\hline
\end{tabularx}
    \caption{Errors and observed convergence rates for Example~\ref{ex3}, using a reference solution computed on a fine grid.}
    \label{tab:example3.3}
\end{table}

\section{Conclusion}

In this work, we studied a one-dimensional nonlocal Poisson problem with discontinuous source term and developed a unified analytical and computational framework for understanding and resolving the resulting discontinuities. On the analytical side, we showed that the discontinuity structure of the solution is governed primarily by the regularity of the interaction kernel. Under general assumptions on compactly supported integrable kernels, jump discontinuities in the source term are inherited by the solution, while the finer behavior of the solution near those jumps is dictated by the behavior of the kernel at the origin and at the horizon endpoints.

More precisely, singularities of the kernel, or of its derivatives, at the origin lead to blow-up of corresponding derivatives of the solution at the source discontinuity. By contrast, jump discontinuities of the kernel, or of its derivatives, at \(\pm\delta\) generate cascades of derivative jumps at translated locations inside the domain. These effects are not merely technical possibilities: they arise for kernels commonly used in peridynamic-type models. At the same time, they show that certain kernel choices may introduce stronger local irregularities than those present in the data itself, as well as discrete losses of regularity at points away from the original jump. In this sense, the analysis reveals potentially unintended consequences of singular kernels and of kernels with sharp horizon cutoffs.

In the smooth compactly supported case, no additional singular structure is created by the operator, and the source term and the solution have equivalent piecewise smooth regularity. 
Thus, smooth kernels avoid the additional regularity effects associated with singular kernels or kernels that are discontinuous at the horizon, which may make them preferable in modeling situations where such effects are not physically desired.

Motivated by this analysis, we then introduced a semi-analytic spectral method for the accurate numerical treatment of discontinuous nonlocal problems. The method is based on successive smoothing transformations that explicitly remove jump discontinuities through analytically constructed correction functions. This converts the original problem into an auxiliary problem with improved regularity, allowing a spectral solver to operate in a regime where high-order accuracy can be recovered. The approximation to the original discontinuous solution is then reconstructed by combining the numerical solution of the smoothed problem with the analytic correction terms.

The numerical experiments demonstrate that this strategy leads to substantial gains in both accuracy and convergence. Without smoothing, the spectral approximation is limited by discontinuities and exhibits the expected deterioration associated with Gibbs oscillations. Once the dominant discontinuity contributions are removed, the method recovers much higher accuracy and, in many cases, high-order convergence. 

Overall, the results of this paper show that regularity analysis and numerical design can be combined in a mutually reinforcing way for nonlocal problems with discontinuities. The analytical theory identifies the singular structures created by the kernel and the source, while the computational framework uses this information to build effective correction terms that preserve the efficiency of spectral solvers while overcoming their principal limitation in the presence of jumps. More broadly, the results clarify how modeling choices at the level of the kernel influence both the qualitative behavior of solutions and the performance of numerical methods.

Several directions remain for future work. These include extending the framework to higher spatial dimensions, treating singular kernels and different boundary constraints, and developing analogous methods for time-dependent nonlocal and peridynamic models involving evolving fracture. Another natural direction is the treatment of multiple interacting discontinuities and more general classes of nonlocal operators arising in peridynamics and related applications.

\appendix
\section{Details for the construction of source terms and constraints}
\label{appendix}

For completeness, we record here the explicit source terms used in the manufactured-solution experiments in Examples~\ref{ex1} and ~\ref{ex2}, as well as the construction of the compatible constraint $b$ used in Example~\ref{ex3}.

In Example~\ref{ex1}, the manufactured source term is
\begin{align*}
    f(x)=
    \begin{cases}
        0, & x\le-\delta,\\
        -\dfrac{105}{\delta^7}(24-24e^{x+\delta}+24x+12x^2+4x^3+x^4+24
        \delta+24x\delta+12x^2\delta+4x^3\delta\\
        \qquad +12\delta^2+12x\delta^2+6x^2\delta^2+4\delta^3+4x\delta^3+\delta^4), & -\delta<x<0,\\
        -\dfrac{21}{\delta^7}(120-120e^{x+\delta}+120x+60x^2+20x^3+5x^4- 120\delta+240\delta e^x-120\delta x\\
        \qquad -60x^2\delta-20x^3\delta+60\delta^2+60x\delta^2+30x^2\delta^2-20\delta^3+40\delta^3 e^x-20x\delta^3\\
        \qquad+5\delta^4+2\delta^5e^x), & 0\le x<\delta,\\
        \dfrac{42}{\delta^7}e^{x-\delta}(-60+60e^{2\delta}-120e^\delta-20\delta^3 e^\delta-\delta^5e^\delta), & x\ge\delta.
    \end{cases}
\end{align*}

In Example~\ref{ex2}, assuming $\delta<3$, the manufactured source term is
\begin{align*}
    f(x)=
    \begin{cases}
        -\dfrac{42}{\delta^7}(-60\cos(2+x-\delta)+60\cos(2+x+\delta)+120\delta\sin(2+x)\\
        \qquad-20\delta^3\sin(2+x)+\delta^5\sin(2+x)),&x\le-2-\delta,\\
        -\dfrac{21}{\delta^7}(-40+80x+60x^2+40x^3+5x^4-80\delta+120\delta x +120\delta x^2+20\delta x^3\\
        \qquad + 60  \delta ^2 + 120 \delta ^2x +
30 \delta ^ 2x^2 + 40\delta^3 + 20\delta^3x + 5\delta^4
-120 \cos(2 + x -\delta)\\
\qquad + 240\delta\sin(2 + x) -40\delta^3\sin(2 + x) + 2 \delta^5
\sin(2 + x)), &-2-\delta<x\le-2,\\
-\dfrac{105}{\delta^7}(-8 -16x + 12x^2 + 8x^3 +
x^4 + 16\delta -24 x\delta -24x^2\delta -4 x^3\delta\\
\qquad + 12 \delta^2 + 24x\delta^2 + 6x^2
\delta^2 -8 \delta^3 -4x\delta^3 + \delta^4
-24\cos(2 + x -\delta)), &-2<x<-2+\delta,\\
0,&-2+\delta\le x\le 4-\delta,\\
-\dfrac{105}{e^4\delta^7}(120e^4 -24
e^{x + \delta} -136e^4 x + 60e^4 x^2 -12e^4 x^3 + e^4 x^4 -136 e^4\delta\\
\qquad + 120 e^4x\delta -36 e^4
x^2 \delta + 4 e^4 x^3\delta + 60e^4\delta^2 -36e^4 x\delta^2+ 6 e^4x^2 \delta ^2\\
\qquad -12 e^4 \delta^ 3
+ 4 e^4 x \delta^3 + e^4\delta^4), &4-\delta<x<4,\\
-\dfrac{21}{e^4\delta^7}(600e^4 -120
e^{x + \delta} -680 e^4 x + 300 e^4 x^2 -60e^4 x^3+ 5 e^4 x^4\\
\qquad+ 680 e^4 \delta + 240 e^x \delta -600e^4 x\delta + 180 e^4 x^2 \delta -20 e^4
x^3 \delta + 300e^4 \delta^2\\
\qquad -180 e^4 x \delta^2 + 30 e^4 x^2 \delta^2 + 60e^4\delta^3 + 40e^x\delta^3 -20e^4 x\delta^3 + 5e^4\delta^4 + 2e^x\delta^5), &4\le x<4+\delta,\\
\dfrac{42}{\delta^7}e^{x-\delta-4}(-60 + 60e^{2\delta} -120\delta e^\delta-20\delta^3e^\delta -\delta^5e^\delta),&x\ge 4+\delta.
    \end{cases}
\end{align*}

In Example~\ref{ex3}, the compatible constraint $b$ is constructed numerically as follows. Let $f_4$ denote the level-$4$ smoothing of $f$ produced by Algorithm~\ref{alg:smooth}, with corresponding coefficients $c_0,c_1,c_2,c_3,c_4$. We enlarge the original domain in \eqref{eq:Poisson} to
\[
\Omega'=(-2-\delta,2+\delta),
\]
with associated collar
\[
\Omega_I'=(-2-2\delta,-2-\delta)\cup(2+\delta,2+2\delta).
\]
The set $\Omega'\cup\Omega_I'$ is then extended periodically, and the problem
\[
Lw_4=f_4
\]
is solved on the torus $\mathbb{T}$. By \cite{alali2021fourier}, since $f_4\in C^4(\mathbb{T})$, the corresponding solution satisfies $w_4\in C(\mathbb{T})$. We then define
\[
w=w_4+\sum_{k=0}^4 c_k\varphi_k,
\]
and set
\[
b=w|_{\Omega_I}.
\]


\bibliographystyle{acm}
\bibliography{ref}

\end{document}